\newtheorem{thm}{Theorem}
\newtheorem{ex}{Example}
\newtheorem{exs}{Exercise}
\newtheorem{rk}{Remark}
\theoremstyle{definition}
\newtheorem{dfn}{Definition}
\newtheorem{notat}{Notation}
\newcommand\Na{\mbox{Set}}
\title{Reidemeister Moves and Groups}
\author{Vassily Olegovich Manturov\footnote{The author is partially
supported by Laboratory of Quantum Topology of Chelyabinsk State
University (Russian Federation government grant 14.Z50.31.0020), by
RF President NSh Ч 1410.2012.1,
 and
by grants of the Russian Foundation for Basic Resarch,
13-01-00830,14-01-91161, 14-01-31288.}}
\date{}
\def\R{{\mathbb R}}
 \def\Z{{\mathbb Z}}
 \def\0{{\mathbbf 0}}
 \def\1{{\mathbbf 1}}
 \def\gG{{\mathfrak G}}
 \newcommand{\skcr}{\raisebox{-0.25\height}{\includegraphics[width=0.5cm]{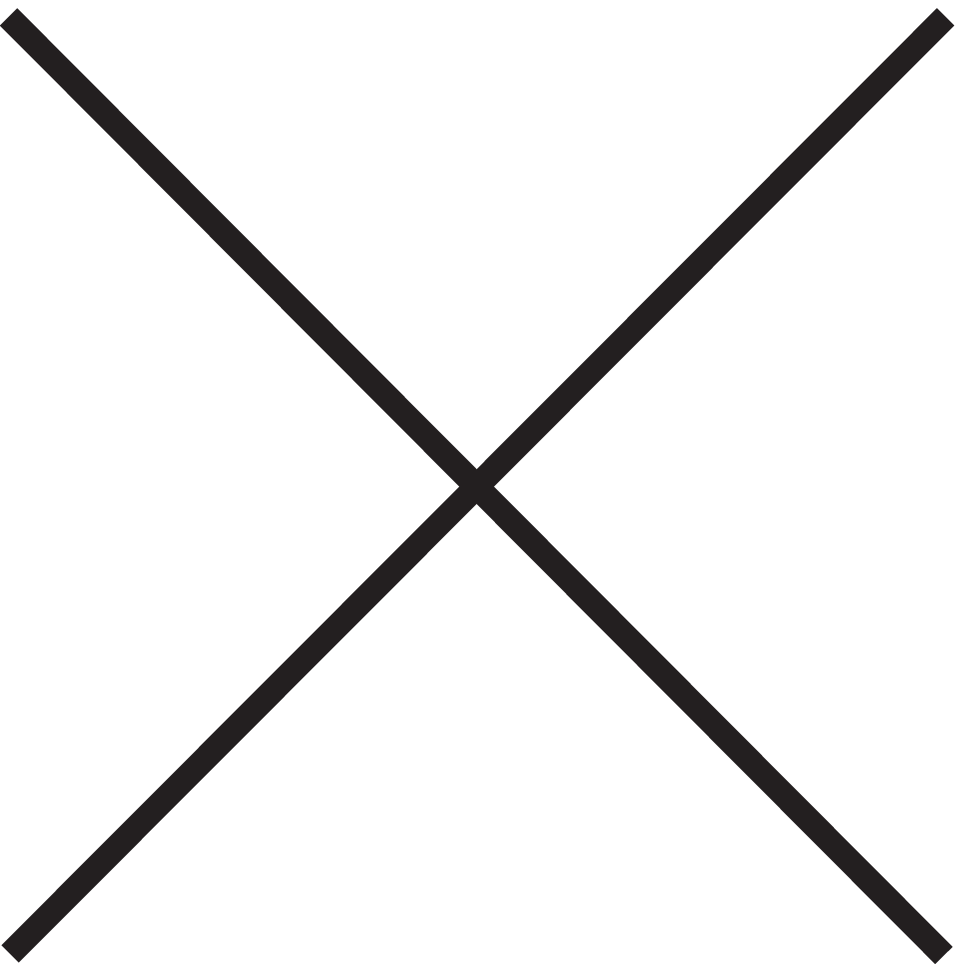}}}
 \newcommand{\skcrv}{\raisebox{-0.25\height}{\includegraphics[width=0.5cm]{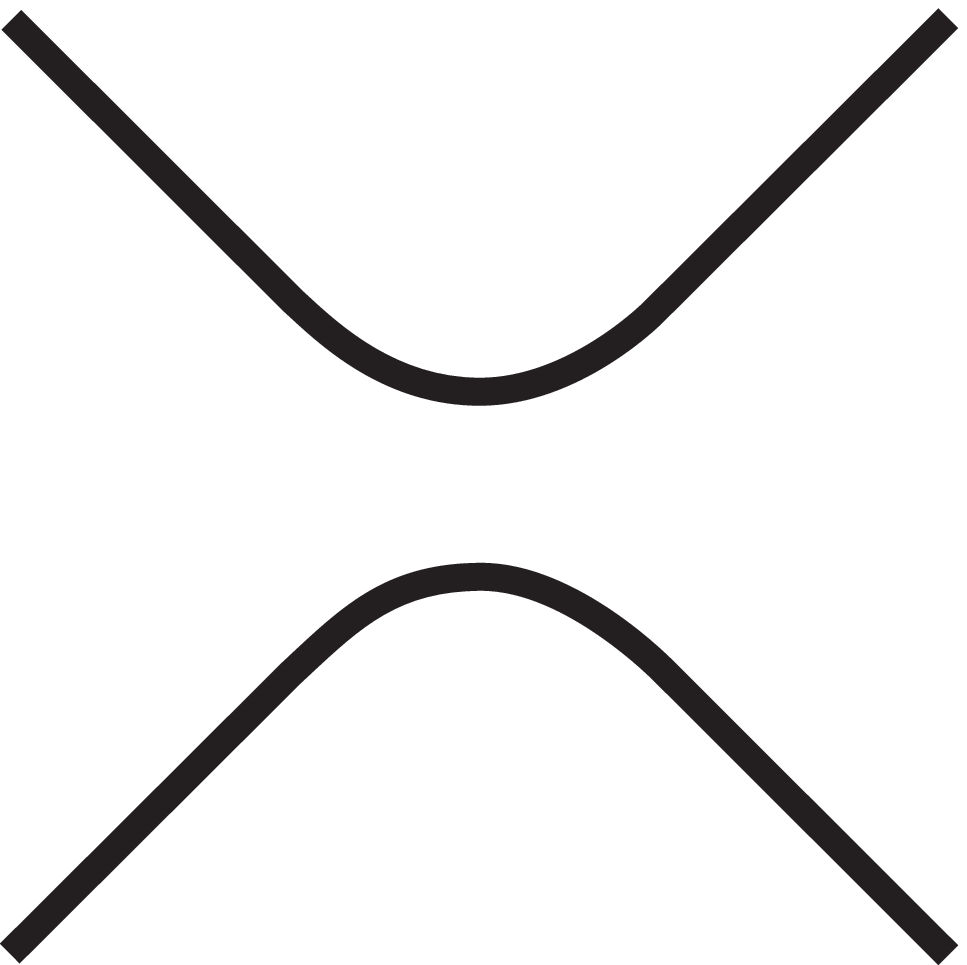}}}
\newcommand{\skcrh}{\raisebox{-0.25\height}{\includegraphics[width=0.5cm]{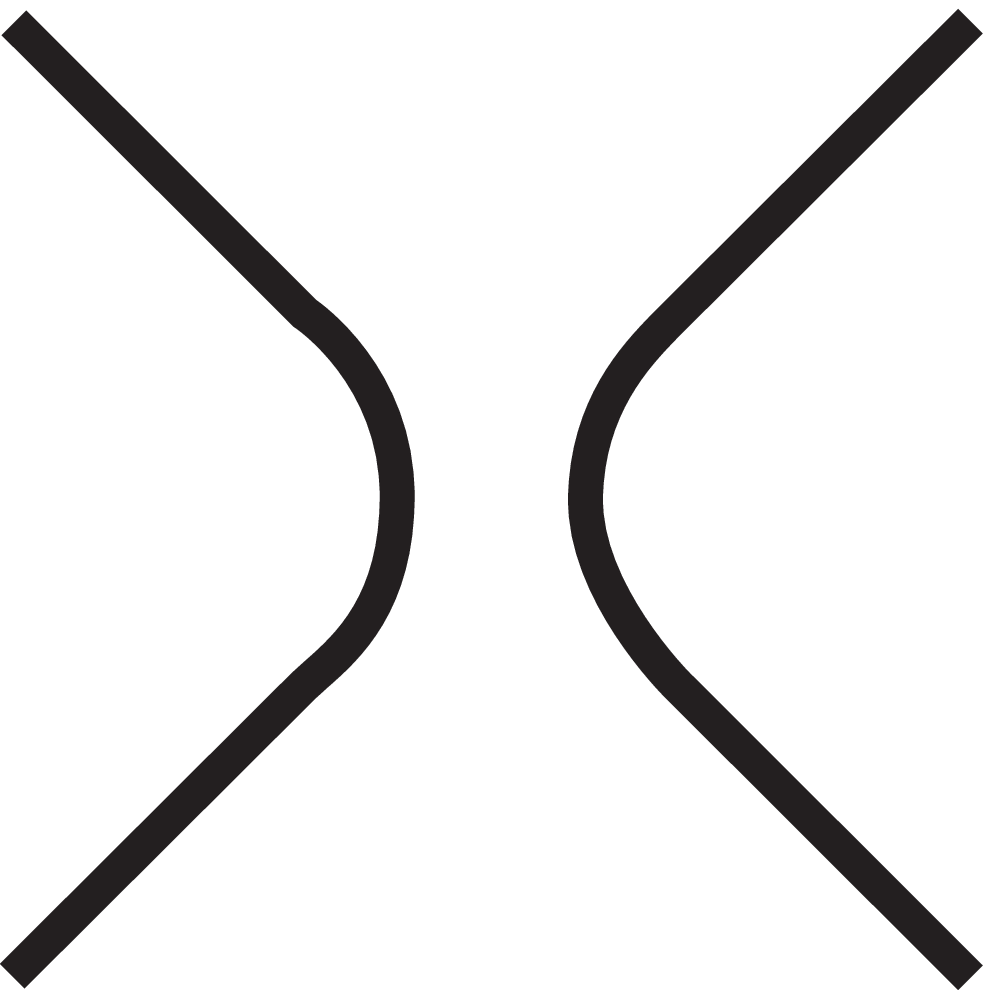}}}
\newcommand{\skcrosso}{\raisebox{-0.25\height}{\includegraphics[width=0.5cm]{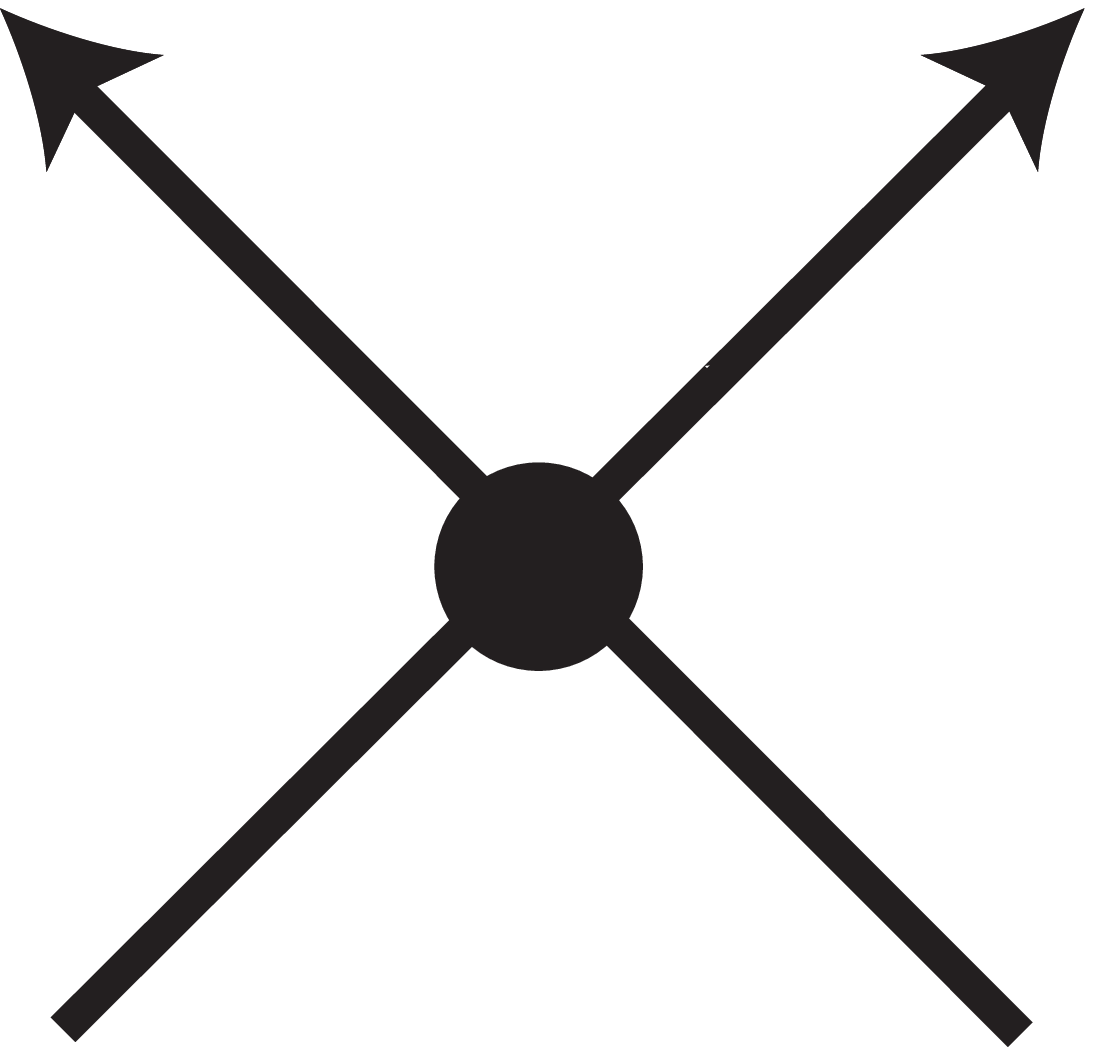}}}
 \newcommand{\skcrho}{\raisebox{-0.25\height}{\includegraphics[width=0.5cm]{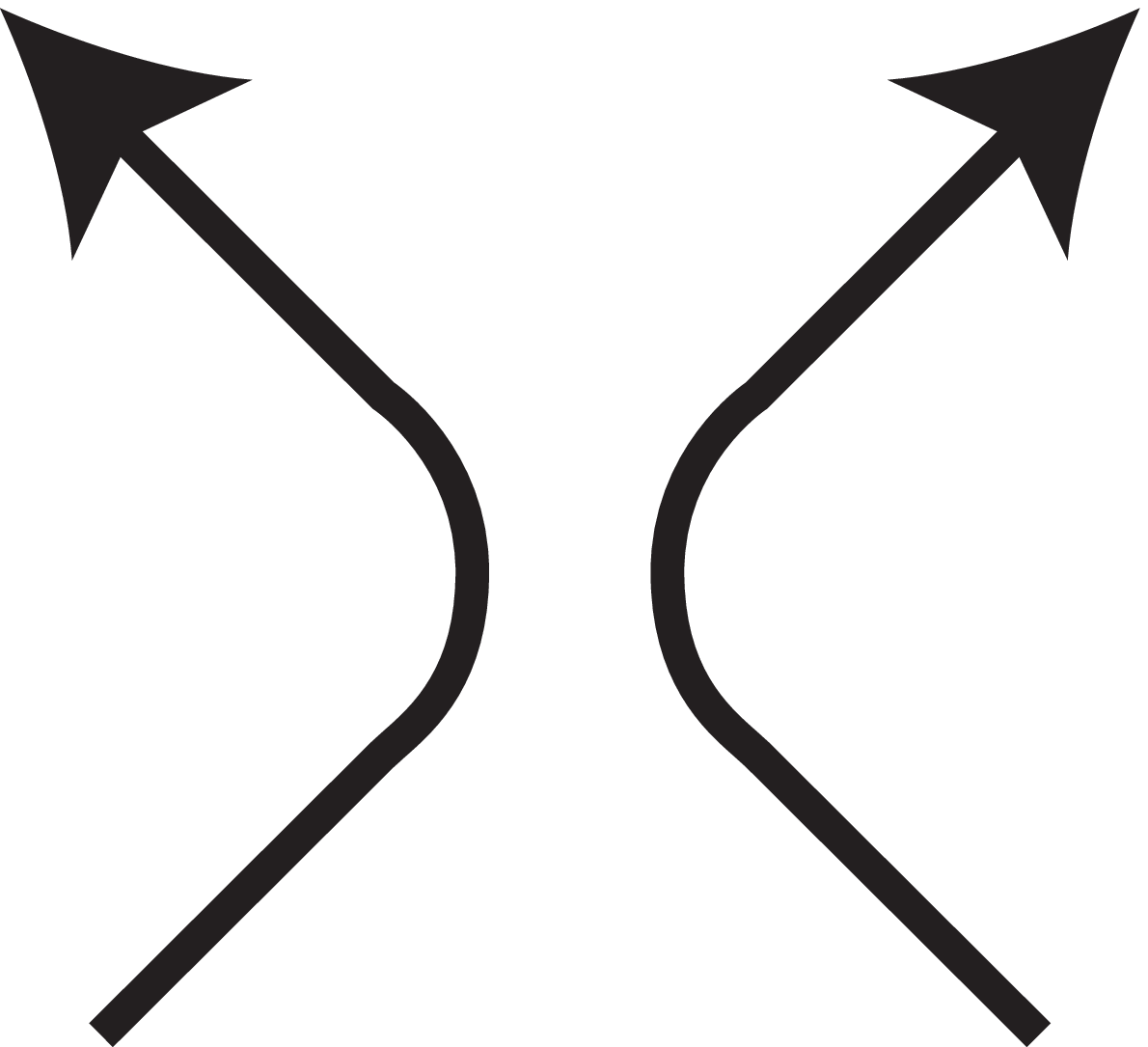}}}
\begin{document}

\maketitle

\begin{abstract}Recently, the author discovered an interesting class of
knot-like objects called {\em free knots}. These purely
combinatorial objects are equivalence classes of Gauss diagrams
modulo Reidemeister moves (the same notion in the language of words
was introduced by Turaev \cite{Turaev}, who thought all free knots
to be trivial). As it turned out, these new objects are highly
non-trivial, see \cite{Parity}, and even admit non-trivial cobordism
classes \cite{Cobordisms}. An important issue is the existence of
invariants where a {\em diagram evaluates to itself} which makes
such objects ``similar'' to free groups: an element has its minimal
representative which ``lives inside'' any representative equivalent
to it.

In the present paper, we consider generalizations of free knots by
means of (finitely presented) groups. These new objects have lots of
non-trivial properties coming from both knot theory and group
theory: functoriality, coverings, etc. This connection allows one
not only to apply group theory to various problems in knot theory
but also to apply Reidemeister moves to the study of (finitely
presented) groups.

Groups appear naturally in this setting when graphs are embedded in
$2$-surfaces.

 {\bf Keywords:} Group, Graph, Reidemeister Move, Knot, Free Knot.
\end{abstract}

{\bf AMS MSC} 05C83, 57M25, 57M27
\section{Introduction. Basic Definitions and Notation}

Knots and virtual knots are encoded by $4$-valent graphs $\Gamma$
with a {\em framing}  (opposite edge structure at vertices) and some
decorations at vertices (the structure of overpasses and
underpasses) modulo {\em Reidemeister moves}. When skipping
de\-co\-rations of crossings, we get to certain equivalence classes
of virtual knots, called {\em free knots}. If such a graph $\Gamma$
is treated as the image of immersion of a curve $\gamma$ in an
oriented $2$-surface $\Sigma$ in such a way that $\Gamma\backslash
\gamma$ admits a checkerboard colouring, then there is a natural
presentation of the quotient group $\pi_{1}(\Sigma)\slash \langle
[\gamma]\rangle$ by the normal closure of the element corresponding
to $\gamma$ where {\em vertices are generators} and {\em regions are
relators}; for more details, see \cite{IMN,FedoseevManturov}.

Thus, considering $\Gamma$ as a  homotopy class of $\gamma$,
vertices of $\Gamma$ get some natural interpretations in terms of
homotopy classes. When performing Reidemeister moves to $\Gamma$,
labels of vertices undergo natural transformations.

When considering more abstract graphs unrelated to any surfaces, one
gets the notion of a {\em free knot} (in the case of many components
$\gamma_{i}$, one can talk about a {\em free link}). A {\em free
knot} is a $1$-component free link.

Free knots possess highly non-trivial invariants demonstrating the
following principle:

{\em if a free link diagram $K$ is complicated enough, then every
diagram $K'$ equivalent to $K$ contains $K$ inside}.

This result is achieved by using the parity bracket $[\cdot]$, an
invariant of free links valued in {\em diagrams of free links} such
that $[K]=K$ for diagrams which are large enough, see ahead.
 This allowed the author to get new
approaches to various problems in virtual \cite{Projection,Crossing}
and classical knot theory \cite{ChrismanManturov,KrasnovManturov},
and various questions of topology.

Thus, free links are interesting objects by themselves; on the other
hand, vertices of their diagrams (framed $4$-graphs) can be labeled
by elements of certain groups.

This leads to the notion of {\em group free knots} or {\em $G$-free
knots} where $G$ is a (finitely presented) group. When forgetting
labelings of vertices of framed $4$-graphs, we get usual
(classical,virtual, flat or free) knot theory, when embedding a
framed $4$-graph into an oriented $2$-surface, we get a natural
labeling.

When applying Reidemeister moves to diagrams within a $2$-surface,
we naturally get moves for labeled diagrams.

Thus, considering various questions concerning equivalence classes
of group free knots, we can study arbitrary groups. Indeed, every
group $G$ leads to its own $G$-labeled free knot theory, and
depending on which graphs turn out to be equivalent in this group,
one may judge about the structure of the group.

Let us now pass to formal definitions of the objects, we are going
to deal with.

\begin{rk}
Throughout the rest of the text, all groups are assumed to be
finitely presented.
\end{rk}

We shall deal with framed $4$-graphs which naturally appear as
shadows of knot diagrams.

\begin{rk}
We say {\em ``$4$-graph''} instead of {\em ``$4$-valent graph''},
because we also admit objects to have {\em circular components},
i.e., circles which are disjoint from the graph.
\end{rk}

\begin{dfn}
A $4$-valent graph $\Gamma$ is {\em framed} if at every vertex $X$,
the four half-edges incident to it, are split into two sets;
(half)edges from the same set are called {\em opposite}; (half)edges
which are not opposite are called {\em adjacent}; such a choice is
called {\em framing} of $\Gamma$.
\end{dfn}

\begin{dfn}
By a {\em rotating cycle} of a framed $4$-graph $\Gamma$ we mean
either a circular component of $\Gamma$ or a sequence of vertices
(possibly, non-distinct) $v_{i}$ and distinct edges $e_{i},
i=0,\dots, k-1,$ such that $e_{i}$ connects $v_{i}$ and $v_{i+1}$
and at $v_{i},$ the edges $e_{i}$ and $e_{i-1}$ are non opposite.
Here indices are counted modulo $k$.
\end{dfn}

\begin{dfn}
When talking about the number of components, we separately count
{\em circular components} and separately count {\em unicursal
components}; by the latter, we mean an equivalence class of edges
called by elementary equivalence relation where every two edges
opposite at some vertex are treated as opposite.
\end{dfn}

Thus, the  disjoint diagram of the disjoint sum of the simplest
$2$-vertex Hopf link and the unknot has three components: a circular
one corresponding to the unknot and the two unicursal components of
the Hopf link.

\begin{dfn}
A {\em source-sink structure} of a framed $4$-graph $\Gamma$ is an
orientation of all edges of $\Gamma$ such that at every vertex $X$
of it some two opposite (half)edges are emanating and the other two
are incoming, see Fig. \ref{SourceSink}.

Circular edges are just assumed to be oriented.
\end{dfn}

\begin{figure}
\centering\includegraphics[width=200pt]{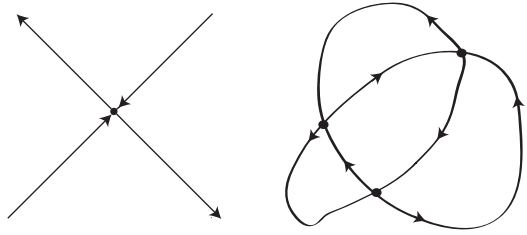} \caption{A
Source-Sink Structure at a Vertex; A Graph with a Source-Sink
Structure} \label{SourceSink}
\end{figure}

\begin{dfn}
Let $G$ be a framed $4$-graph, and let $X$ be a vertex of $G$, let
$a,b$ be one pair of opposite (half)edges, and let $c,d$ be the the
other pair of opposite (half)edges at $X$; by the {\em vertex
removal} we mean the operation of deleting the vertex $X$ from $G$
and connecting $a$ to $b$ and connecting $c$ to $d$.
\end{dfn}

\begin{rk}
Note that if a connected framed $4$-graph admits a source-sink
structure then there exist exactly two such structures which differ
by the overall orientation change.
\end{rk}

\begin{notat}
We adopt the following convention. Whenever we draw an immersion of
a framed $4$-graph in $\R^{2}$, we depict its vertices by solid
circles, those points which are encircled are artifacts of
projection caused by immersion. At every vertex, the framing is
assumed to be {\em induced from the plane}: (half)edges which are
locally opposite on the plane are opposite.
\end{notat}

Note that if at a vertex $X$ of a framed $4$-graph a half-edge $a$
is opposite to a half-edge $c$ and a half-edge $b$ is opposite to a
half-edge $d$, then then when drawing on the plane, the
counterclockwise order of edges can be $a,b,c,d$ or $a,d,c,b$.

 This leads to the two ``moves'' for planar diagrams of
$4$-graphs which do not change the framed $4$-graph: the {\em detour
move} and the {\em virtualization move}. The {\em detour move}
removes one piece of an edge with all encircled crossings inside it
and redraws it elsewhere with new encircled crossings with itself
and other edges, see Fig. \ref{detour}; the {\em virtualiation move}
changes the local counterclockwise order at a vertex without
changing its framing; one can represent this move by flanking the
classical crossings by two encircled intersection points of $a,b$
and $c,d$.

\begin{figure}
\centering\includegraphics[width=150pt]{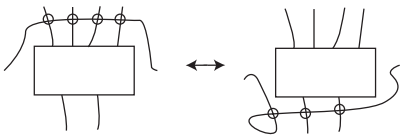} \caption{The
detour move} \label{detour}
\end{figure}

\begin{rk}
In the sequel, when drawing some transformation on the plane, we
shall depict only the changing part of the move assuming the
remaining part to be fixed.
\end{rk}

\begin{ex}
Consider the framed $4$-graph with one vertex $A$ and two loops, $p$
and $q$ such that $p$ is opposite to itself at $A$ and $q$ is
opposite to itself at $A$. Then this graph admits no source-sink
structure because whatever orientation of the edge $p$ we take, it
will be emanating from one side and incoming from the other side.
\end{ex}

\begin{dfn}
We say that a framed $4$-graph is {\em good} if it admits a
source-sink structure.

We say that a good framed $4$-graphs {\em oriented} if a source-sink
structure of it is selected.

Likewise, we  define {\em good} (resp., {\em oriented}) $G$-framed
$4$ graphs $(\Gamma, f)$ if $\Gamma$ is good (resp., oriented).
\end{dfn}

\begin{rk}
We shall refer to {\em oriented} $G$-framed $4$-graphs simply as
$G$-graphs. Moreover, we shall often omit $f$ from the notation if
the definition of $f$ is clear from the context.
\end{rk}

\begin{dfn}
Let $G$ be a group. An {\em oriented} $G$-free link is an
equivalence class of $G$-graphs by the following three Reidemeister
moves shown in Fig.\ref{rmoves}.

\begin{figure}
\centering\includegraphics[width=200pt]{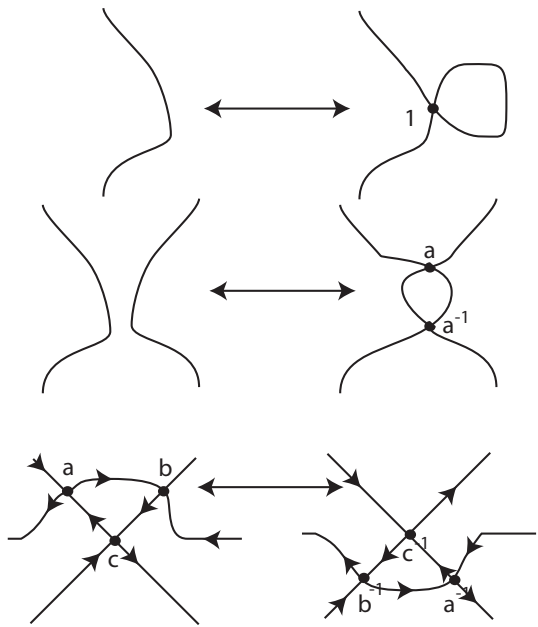}
\caption{The Reidemeister moves for free links}\label{rmoves}
\end{figure}
\end{dfn}

Here we assume that for the first Reidemeister move, the vertex
taking part in this move is marked by the unit element of the group,
for the second Reidemeister moves, the two elements are inverse to
each other, and for the third move we have some three elements
$a,b,c$ in the left hand side such that $a\cdot b\cdot c=1$, and the
opposite elements $a^{-1},b^{-1},c^{-1}$ in the right hand side, see
Fig. \ref{rmoves}.

Note that for the first Reidemeister move the choice of the
source-sink orientation does not matter; it only matters whether
these orientations agree for the edges touching the boundary of the
picture; for the third Reidemeister move, it is crucial to require
that the cyclic order of the three vertices along the source-sink
orientation is $a,b,c$ and not $a,c,b$.

\begin{dfn}
Let $G$ be a finitely presented group. By a {\em
$G$-framed-$4$-graph} we mean a pair $(\Gamma,f)$ where $\Gamma$ is
a framed $4$-graph and $f$ is the map from the set of vertices of
$\Gamma$ to $G$.
\end{dfn}

Let $\Gamma$ be a $G$-graph. Let $\gamma$ be a rotating cycle on
$\Gamma$ with $k$ vertices (a vertex is counted twice if it is
passed twice). Then $\gamma$ inherits a natural orientation from the
source-sink structure of $\Gamma$. If we choose a starting point on
$\gamma$, we can write down the elements of $G$ corresponding to the
vertices of the group $G:$ $[\gamma]=\gamma_{1},\cdots,\gamma_{k}\in
G$. If no starting point is given, we can obtain a conjugacy class
of $\gamma$.

\begin{dfn}
The {\em minimal crossing number} of a $G$-free link $L$ is the
minimal number of vertices of oriented $G$-framed $4$-graphs
representing $L$.
\end{dfn}

This definition means that every group $G$ leads to a well defined
{\em free knot theory} corresponding to this group. Even for the
trivial group, this theory is extremely interesting: it gives rise
to the theory of {\em even} free knots (i.e. free knots whose
diagrams admit source-sink structure), which has non-trivial
invariants reproducing itself \cite{Parity,Doklady}.

Thus, one can use this theory in both directions: it is possible to
study groups by means of free knots and their generalizations and to
study various knot theories by using group-valued labeling of their
vertices.

\begin{rk}
A similar theory can be constructed for all free knots, not
necessary having source-sink structure, however, in this way one
should overcome some difficulties with orientations corresponding to
relations for Reidemeister moves. We shall touch on this question in
another paper.

Note also that, in the case of the $\Z_{2}$-group, the theory can be
constructed even in the case of arbitrary graphs (see ahead).
\end{rk}

The paper is organized as follows. In the next section, we present
the three main tools for constructing invariants of our object, {\em
the parity bracket}, {\em the parity delta}, and {\em the covering}.

In the third section, we shall consider various examples where the
group labeling  arises in topological situation, and which relations
on groups does this group labeling are imposed in concrete
situation. We shall mostly concentrate on the case of group coming
from a $2$-surface.

We conclude the paper by a list of unsolved problems relating
Reidemeister moves with group theory.

\section{Basic Invariants}

We are now ready to construct our first invariants of groups. Let
$G$ be a finite group.

\begin{dfn}
Let ${\cal R}(G)$ be the $\Z_{2}$-module freely generated by all
$G$-knots.
\end{dfn}

\begin{dfn}
Let $G$ be a group. By $\Na(G,k)$ we mean the formal integral linear
combination of all $G$-links $K$ having minimal crossing number $k$.
\end{dfn}

From definition, it follows that $\Na(G,k)$ is an invariant of the
group $G$.

\begin{rk}
For the case of infinite $G$, the number of all $G$-links $K$ having
minimal crossing number $k$ can be infinite.
\end{rk}

\begin{ex}
Let $G$ be the trivial group. Then the $G$-link theory coincides
with the theory of (having source-sink structure and oriented) free
links.
\end{ex}

One of the main phenomena in virtual knot theory is the {\em local
crossing information}. Having two (or many) different types of
crossings which behave nicely under Reidemeister moves, we can
enhance many invariants by localizing some information at crossings;
moreover, this allows one to reduce the study of objects to the
study of their diagrams.

In the case of the $\Z_{2}$-group, the main invariants are called
{\em the parity bracket}, {\em the parity projection}, and {\em
parity covering}. We are going to describe the parity bracket and to
generalize the projection for the case of arbitrary groups.

The {\em group bracket}, unlike the parity bracket, does not reduce
the study of knots to the study of their diagrams, but rather is a
functorial mapping from the category of $G$-knots to $G$-knots.

Actually, iterative use of projections, and coverings and other
tools lead to enhancement of many invariants constructed
combinatorially.

In a similar way, one can enhance various combinatorial invariants
in the case of arbitrary group $G$. We shall touch on these problems
in subsequent papers.

Another important issue in the virtual knot theory is that many
invariants (like linking numbers or writhe numbers) become
picture-valued: instead of some count of crossings (with signs), we
are allowed to count pictures arising at these points.

\subsection{The Parity Bracket}

In the present section, we will concentrate on the case of the group
$\Z_{2}$.

We will define the {\em parity bracket} which is well-defined for
$\Z_{2}$-graphs and which realizes the principle ``if a knot (link)
diagram is complicated enough then it reproduces itself''.

In this case, {\em complicated enough} will mean {\em irreducible
and odd}, see ahead, however, for other groups $G$ there are many
other situation where it works.

A $\Z_{2}$-labeling $f$ of a $G$-graph $\Gamma$ is usually referred
to as {\em parity}.

This bracket was first defined in \cite{Parity} for the so-called
{\em Gaussian parity} which is trivial for all good framed
$4$-graphs. Nevertheless, all the results stated below (as well as
their proofs) remain true {\em whatever parity we take.}

\begin{dfn}
By a {\em smoothing} of a framed $4$-graph at a vertex we mean the
result of deleting this vertex and repasting the half-edges into two
edges in one of the two possible ways $\skcr\to \skcrv$,$\skcr\to
\skcrh$.
\end{dfn}

\begin{rk}
Note that the smoothing may result in new circular components of a
free graph.
 Note that a smoothing of a {\em good} framed
$4$-graph is {\em good}.
\end{rk}

Denote by $\gG$ the space of  $\mathbb{Z}_{2}$-linear combinations
of the following objects. We consider the linear space of all framed
$4$-graphs (not necessarily good ones) subject to the second
Reidemeister moves.

By a {\em bigon} of a framed $4$-graph $\Gamma$ we mean two edges
$e_{1}, e_{2}$ which connect some two vertices $v_{1}$ and $v_{2}$
and are non-opposite at both these vertices; bigons appear in the
second Reidemeister moves. A framed $4$-graph is {\em irreducible}
if it has no bigons inside.

\begin{rk}
It can be easily seen that the equivalence classes modulo the second
Reidemeister moves are characterized by its minimal representatives,
i.e., framed $4$-graphs without ``bigons''. Thus, we shall use the
term ``graph'' when talking about an element from $\gG$ assuming the
minimal representative of its element.
\end{rk}

Let $K$ be a framed good $Z_{2}$ $4$-graph. The bracket invariant
$[\cdot]$ (see \cite{Parity}) is given by
 $$
[K]=\sum_{s_{{even,1-component}}}K_{s}\in \gG,
 $$
where the sum is taken by all smoothings $s$ at even vertices which
lead to one-component diagrams $K_{s}$.
 This sum is considered as an
element from $\gG$.

The following theorem is proved in \cite{Parity}
\begin{thm}
$[\cdot]$ is an invariant of free knots; in other words, if two
framed $4$-graphs $\Gamma_{1}$ and $\Gamma_{1}$ are equivalent then
$[\Gamma_{1}]=[\Gamma_{2}]\in \gG$.
\end{thm}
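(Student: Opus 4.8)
The plan is to show invariance under each of the three Reidemeister moves separately, since the bracket $[K]$ is defined as a sum over certain smoothings and lands in $\gG$, which already has the second Reidemeister relation built in. The overall strategy is the standard ``state-sum invariance'' argument: fix a move that transforms a diagram $\Gamma_1$ into $\Gamma_2$ in a small disk, partition the smoothings appearing in $[\Gamma_1]$ and $[\Gamma_2]$ according to how they interact with the crossings inside that disk, and set up a correspondence between the two state sums so that contributions either match termwise or cancel in pairs modulo $2$. I would carry this out move by move, in increasing order of difficulty.

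\medskip

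First I would treat the second Reidemeister move. This is the easiest case precisely because the target space $\gG$ is defined as framed $4$-graphs modulo the second Reidemeister move. The two crossings created in an $R_2$ move carry inverse labels, i.e.\ in the $\Z_2$ setting they carry the same parity, so they are either both even or both odd. If both are odd, neither is smoothed and the two diagrams differ by an $R_2$ move, which is zero in $\gG$. If both are even, I would split the four smoothing choices at the new bigon into those that preserve the one-component condition and those that do not, and check that the surviving terms either reproduce $[\Gamma_2]$ or cancel pairwise; in the remaining mixed cases one smoothing reconnects the strands so that the result again differs by an $R_2$ move and vanishes in $\gG$. The key point is that every term not matching $[\Gamma_2]$ is killed by the ambient relation in $\gG$.

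\medskip

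Next I would handle the first Reidemeister move, where a single vertex labeled by the unit element (an even vertex) is added. A curl is smoothed in two ways: one smoothing detaches a small circle, the other leaves the strand essentially unchanged. One of these two smoothings fails to preserve the one-component condition (it creates a separate circular component), so it is excluded from the sum; the surviving smoothing reproduces exactly the corresponding term of the diagram without the curl. I would check that summing over the remaining vertices gives a bijection between the admissible states of $\Gamma_1$ and those of $\Gamma_2$ that respects the $K_s$ up to the relations in $\gG$.

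\medskip

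The third Reidemeister move is where the real work lies, and I expect it to be the main obstacle. Here three crossings are involved, with the parity constraint that their labels multiply to $1$; translated to $\Z_2$, this forces the number of odd crossings among the three to be even, so either zero or two of them are odd. The argument requires a careful case analysis: for each assignment of parities to the three crossings consistent with the constraint, and for each choice of which even crossings are smoothed, one must match the resulting one-component smoothed diagrams on the two sides of the move. The delicate part is that a single smoothing at one of the three crossings changes the connectivity pattern of the three strands, so the one-component condition interacts nontrivially with the configuration; one must verify that the admissible states biject across the move and that any leftover terms differ by second Reidemeister moves and hence vanish in $\gG$. I would organize this by first disposing of the case where all three crossings are odd-labeled (no smoothing, so the two sides differ by a genuine third Reidemeister move, which must be shown to be trivial in $\gG$), and then handling the mixed-parity cases by the pairing-and-cancellation scheme, using throughout that the orientation convention fixing the cyclic order $a,b,c$ guarantees the strands reconnect consistently on both sides.
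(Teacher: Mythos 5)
Your overall strategy is the right one and is, in fact, the standard argument: check each Reidemeister move, match states termwise, cancel identical pairs modulo $2$, use the one-component condition to exclude states with free circles, and use the relation built into $\gG$ (second moves only) to identify terms that differ by a bigon. Note that the paper itself does not prove this theorem but cites \cite{Parity} for it; the closest proof actually written out in the paper is that of the group bracket $[\cdot]_{G}$, which follows exactly this skeleton. Your treatment of the first move is correct, and your second-move case is correct once tightened: when both new crossings are even, one uniform smoothing matches the states of the other diagram, the two mixed smoothings produce \emph{identical} diagrams and cancel modulo $2$, and the remaining uniform smoothing is excluded because the bigon closes up into a free circular component (exclusion comes from the component count, not from any relation in $\gG$).

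The genuine problem is in your organization of the third move. The case you propose to ``dispose of first'' --- all three crossings odd --- cannot occur: as you yourself observe, $a\cdot b\cdot c=1$ in $\Z_{2}$ forces the number of odd crossings to be even, hence $0$ or $2$. More seriously, your proposed disposal of it --- showing that a genuine third Reidemeister move ``is trivial in $\gG$'' --- is impossible: $\gG$ is defined modulo \emph{second} moves only, and two graphs differing by a third move are in general distinct elements of $\gG$. If the all-odd configuration could occur, the bracket would simply fail to be invariant; the constraint on the labels exists precisely to exclude it, and recognizing this vacuity is part of the proof rather than a case to be argued. In the two cases that do occur, your plan defers the actual work: for two odd crossings and one even one, what must be shown (and what the paper's group-bracket proof shows explicitly) is that one of the two smoothings of the even crossing yields literally identical diagrams on the two sides of the move, while the other yields diagrams differing by two second Reidemeister moves, hence equal in $\gG$ --- this is matching of terms, not cancellation. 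For all three crossings even, the eight-state analysis, where the one-component condition interacts with the reconnection pattern of the three strands, is the crux of the whole theorem, and your proposal only restates what needs to be verified there without verifying it.
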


Let us call a  $\Z_{2}$-graph {\em odd} if all vertices of it are
odd.

It follows from the definition of the bracket that if $\Gamma$ is
odd and irreducible then $[\Gamma]=[\Gamma]$. In particular, this
means that {\em for every irreducible and odd $\Z_{2}$-graph
$\Gamma$ and every graph $\Gamma'$ equivalent to it, $\Gamma$ can be
obtained from $\Gamma'$ by means of smoothings at some vertices.}

\subsection{Turaev's Delta and its Generalizations}

Actually, many integer-valued invariants of topological objects can
be calculated as certain (algebraic) sums taken over certain
reference points (e.g., some intersection points in some
configuration spaces etc). It turns out that in many situations
which happen in low-dimensional topology, these points can be
themselves endowed with certain (topological or cominatorial)
information which makes them responsible for non-triviality of the
object itself and various properties of it. Moreover, in some cases
these points can be associated with objects similar to the initial
one.

This switch from numbers to pictures changes the situation
crucially. The (algebraic) sums instead of being just integer-valued
invariants of the initial object, transform into functorial mappings
from objects to similar objects.

For curves in $2$-surfaces, there are two operations, the
multiplication and the comultiplication. The first one is due to
W.Goldman \cite{Goldman}, and the second one is due to V.G.Turaev
\cite{Turaev2}. The main idea is that we can get knots from
$2$-component links and $2$-component links from knots by taking
smoothings at some crossings. Then we take sums over such crossings
and get an invariant map. We are mostly interested in the second
operation and call it and its generalization ``Turaev's delta''.

However, if we forget about $2$-surfaces and deal with curves in
general position, we can take care not about their homotopy classes,
but rather about free knots or links which appear at these crossings
(see \cite{IMN} for free knots). But if we deal not with bare free
knots, we can enhance the definition and take into account some
group information.

Let us be more specific.

Let $\gamma$ be a one-component framed $4$-graph.

At each crossing $c$, 
onecan perform the following {\em oriented smoothing} $\skcrosso\to
\skcrho$. Let us denote the result of such smoothing at $c$ by
$\gamma_{c}$.

Then $\gamma_{c}$ is a framed $4$-graph with two unicursal
components.

Consider the  linear space ${\cal M}$ generated by free
two-component links.

Now let ${\cal L}$ be the quotient of links in ${\cal M}$ by those
links having one trivial component.

Let $\Delta(\gamma)=\sum_{c} \gamma_{c} .$

\begin{thm}
$\Delta$ is a well-defined map from the set of free knots
 to ${\cal
L}$.
\end{thm}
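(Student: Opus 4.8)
The map $\Delta$ is manifestly well defined at the level of diagrams: the oriented smoothing $\skcrosso\to\skcrho$ involves no choices, and $\Delta(\gamma)=\sum_c\gamma_c$ runs over all crossings of $\gamma$. Moreover, the detour and virtualization moves do not alter the underlying framed $4$-graph, hence leave each $\gamma_c$ unchanged and need no checking. Thus it remains only to prove that the image of $\Delta(\gamma)$ in ${\cal L}$ is preserved by the three Reidemeister moves. The plan is, for each move, to partition the crossings of $\gamma$ into those lying outside the disk $D$ in which the move is supported and the (one, two, or three) crossings created inside $D$, and to analyse the two families separately.

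First I would treat the crossings $c$ lying outside $D$. The oriented smoothing at such a $c$ is a local operation disjoint from $D$, so $\gamma_c$ still contains the entire configuration of the move inside $D$, and the move may be performed there. Consequently $\gamma_c$ and the corresponding term $\gamma'_c$ of the transformed diagram differ by a single Reidemeister move and hence represent the same generator of ${\cal M}$; working over $\Z_2$ these outside terms match bijectively and cancel in the difference $\Delta(\gamma)+\Delta(\gamma')$. One must of course check that the oriented smoothing at $c$ is compatible with the source-sink orientation of the remaining diagram, so that the move is legitimately applicable; this is routine, since smoothing at $c$ does not touch $D$.

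It then remains to handle the crossings inside $D$, move by move. For the first move the single new crossing $c_0$ is a curl, and the oriented smoothing there splits off the small loop, which bounds a disk and meets no other crossing; thus $\gamma_{c_0}$ acquires a trivial split component and vanishes in ${\cal L}$ by the very definition of the quotient, while all remaining terms are accounted for above. For the second move the two new crossings $c_1,c_2$ bound a bigon, and the $180^\circ$ rotation about the centre of the bigon interchanges $c_1$ and $c_2$ while carrying the oriented smoothing at one into the oriented smoothing at the other; hence $\gamma_{c_1}$ and $\gamma_{c_2}$ are isomorphic framed $4$-graphs, so over $\Z_2$ one has $\gamma_{c_1}+\gamma_{c_2}=0$. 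For the third move one invokes the natural bijection between the three crossings on the two sides — each crossing is the transverse intersection of a fixed pair among the three participating strands, and this pairing is respected by the move — and checks, for each matched pair, that smoothing at the two crossings produces two-component diagrams carried one to the other by second Reidemeister moves realised by sliding the third strand across.

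The third move is where the real work lies, and it is the main obstacle. The hard part will be verifying, pair by pair, that after the oriented smoothing the ``slide the third strand'' step is genuinely a sequence of (oriented) second Reidemeister moves respecting the source-sink structure, and that the prescribed cyclic order $a,b,c$ of the three crossings — which enters the definition of the move — makes the smoothings on the two sides correspond correctly rather than becoming transposed. A secondary point to confirm is the orientation bookkeeping in the first move: one should check that the loop detached by the oriented smoothing really yields a trivial (circular) component, so that the term genuinely dies in ${\cal L}$. Once these local verifications are in place, summing the matched contributions gives $\Delta(\gamma)=\Delta(\gamma')$ in ${\cal L}$ for each of the three moves, and hence $\Delta$ descends to a well-defined map from free knots to ${\cal L}$.
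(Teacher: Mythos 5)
Your proposal is correct and follows essentially the same route as the paper: a case-by-case check of the three Reidemeister moves, with the crucial observation (the only one the paper's own two-sentence sketch makes explicit) that the smoothing at the crossing created by the first move splits off a trivial component, which is precisely why $\Delta$ must take values in the quotient ${\cal L}$ rather than in ${\cal M}$. Your treatment is in fact considerably more detailed than the paper's, correctly adding the $\Z_2$-cancellation of the two matched terms in the second move and the pairwise matching of terms in the third move, both of which the paper leaves implicit.
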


The proof follows from a consideration of all Reidemeister moves.
Looking carefully at Reidemeister moves, we see that when taking sum
in ${\cal M}$, the invariance Reidemeister move requires the
two-component free link with one trivial component to be zero.

\begin{rk}
For the case of $2$-surfaces, we have homotopy classes of curves
which are represented by curves in general position; the latter have
intersection points which correspond to vertices of the framed
$4$-graph.
\end{rk}

If we had not curves in $2$-surfaces but $G$-knots it would be
sufficient to define our $\Delta_{G}$ to be

$$\Delta_{G}=\sum_{c, l(c)\neq 1}\gamma_{c},$$
to be the sum over all crossings $c$ with non-trivial label $l(c)$
of the resulting two-component free links.

Note that the invariance under the third Reidemeister move does not
allow one to consider the summands as $G$-links.

However, one can split $\Delta_{G}$ with respect to elements of
$\Delta$. For every element $g\neq 1\in G$ we define

$\Delta_{g}=\Delta_{g}=\Delta_{g^{-1}}$ to be

$$\sum_{c,l(c)\in \{g,g^{-1}\}} \gamma_{c}.$$

Then all $\Delta_{g}$ are invariants of the initial link and
$\Delta_{G}$ naturally splits into the sum over all classes of
non-trivial inverse elements.

As we shall see later, the structure of $G$-knots naturally appears
for curves in $2$-surfaces.

\begin{thm}
For each $g$, the mapping $\Delta_{g}$ is invariant.

Consequently, $\Delta_{G}$ is an invariant mapping.
\end{thm}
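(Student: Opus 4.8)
The plan is to prove invariance of each $\Delta_g$ under the three Reidemeister moves directly, crossing by crossing, and then derive the invariance of $\Delta_G$ by noting that it decomposes as a sum $\Delta_G = \sum_{[g]} \Delta_g$ over conjugacy-stable classes $\{g,g^{-1}\}$ of nontrivial elements. Since the earlier theorem already establishes that $\Delta$ (the unlabeled oriented-smoothing map valued in ${\cal L}$) is well-defined on free knots, the essential new content here is to track the $G$-labels $l(c)$ through each move and check that the restriction to crossings with $l(c)\in\{g,g^{-1}\}$ is preserved. I would set up the computation by fixing a $G$-graph $\Gamma$ and comparing $\Delta_g(\Gamma)$ with $\Delta_g(\Gamma')$ for $\Gamma'$ obtained from $\Gamma$ by a single move, where both sides are read as elements of ${\cal L}$.

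First I would handle the first Reidemeister move. The vertex created or destroyed carries the unit label $1\neq g$, so it never contributes to $\Delta_g$; the oriented smoothing at that vertex would in any case yield a two-component link one of whose components is the trivial loop introduced by the move, which is zero in ${\cal L}$. For the crossings not participating in the move, their labels and their oriented smoothings are unaffected, and the resulting two-component links are equivalent in ${\cal L}$ because the $R1$ move applied away from $c$ is itself an $R1$ move on $\gamma_c$. Thus $\Delta_g$ is unchanged.

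Next I would treat the second Reidemeister move, where the two vertices carry mutually inverse labels $a$ and $a^{-1}$. If $a\notin\{g,g^{-1}\}$, neither vertex contributes to $\Delta_g$ and the remaining crossings are handled as above. The delicate case is $a\in\{g,g^{-1}\}$: then \emph{both} participating vertices contribute to $\Delta_g$, and I must check that their two contributions either cancel in the $\Z_2$-module or each individually reduce to the zero class in ${\cal L}$. Here the key observation is that performing the oriented smoothing at exactly one of the two bigon vertices produces a two-component link in which one component is a small trivial circle cut off by the bigon, hence zero in ${\cal L}$; and since we work over $\Z_2$, the two such summands cancel. The contributions of non-participating crossings $c$ transform by an honest $R2$ move on $\gamma_c$ and so are equivalent in ${\cal L}$.

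The main obstacle is the third Reidemeister move, and this is where the splitting by $g$ is genuinely needed. The three vertices carry labels $a,b,c$ with $abc=1$ on one side and $a^{-1},b^{-1},c^{-1}$ on the other, so \emph{which} vertices are selected by the condition $l(c)\in\{g,g^{-1}\}$ is preserved vertex-by-vertex across the move (this is exactly why $\Delta_G$ by itself fails to be a $G$-link invariant but the $g$-refined version survives). I would argue that for each of the (up to three) participating vertices whose label lies in $\{g,g^{-1}\}$, the oriented smoothing there yields two-component links on the two sides of the move that are related by a third Reidemeister move performed on $\gamma_c$ — possibly together with detour or virtualization moves that do not change the underlying framed $4$-graph — and hence coincide in ${\cal L}$. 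The care required is in checking that the source-sink orientation forces the cyclic order $a,b,c$ (as stipulated for $R3$ in the definition of the moves), so that the oriented smoothing is compatible on both sides and no spurious trivial component is created or lost; I expect this orientation bookkeeping, rather than any deep argument, to be the real work. Once all three moves are verified, each $\Delta_g$ is an invariant, and since $\Delta_G = \sum \Delta_g$ summed over the classes of nontrivial inverse pairs, $\Delta_G$ is invariant as a consequence.
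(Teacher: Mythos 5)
Your overall strategy---checking each Reidemeister move separately and using the fact that membership of a label in $\{g,g^{-1}\}$ is preserved by all three moves---is exactly what the paper intends (the paper itself gives essentially no details beyond a ``consideration of Reidemeister moves''), and your first-move case and the decomposition $\Delta_{G}=\sum_{g}\Delta_{g}$ are fine. However, your treatment of the second move contains a genuine error. Smoothing at exactly one vertex of a bigon does \emph{not} cut off a small trivial circle as a separate component: after the oriented smoothing at $v_{1}$, the two bigon edges remain attached to the rest of the diagram through the surviving crossing $v_{2}$, so they form a kink (a first-Reidemeister loop) on one of the two resulting components rather than a free-floating circle. Hence neither summand is zero in ${\cal L}$, and the mechanism you cite collapses. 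What is true---and what you actually need---is that the two summands are \emph{equal}: the smoothing at $v_{1}$ yields a link one of whose components carries a kink at $v_{2}$, the smoothing at $v_{2}$ yields a link whose other component carries a kink at $v_{1}$, and after removing these kinks by first Reidemeister moves both give the same two-component free link; it is this equality that makes them cancel over $\Z_{2}$. As written, you assert the cancellation but justify it by a false claim (and if each term really were zero, no cancellation would be needed at all).

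There is also an inaccuracy in your third-move case: after smoothing at one triangle vertex on each side of the move, the resulting diagrams retain only the two remaining triangle crossings, so they cannot be ``related by a third Reidemeister move''; they are related by \emph{second} Reidemeister moves---on each side the strand that was slid across crosses one of the two smoothing arcs twice, forming a bigon, and removing these bigons identifies the two results. The detour and virtualization moves you invoke are irrelevant here, since the objects are abstract framed $4$-graphs, not planar pictures. These corrections do not change the architecture of your argument---the term-by-term correspondence and the $\{g,g^{-1}\}$-stability are the right ideas, and they are the same ideas the paper relies on---but the local verifications at the second and third moves must be replaced by the arguments above for the proof to stand.
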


 \subsection{The group bracket}

    As we have seen before, $\Z_{2}$-knots admit a natural bracket
$[\cdot]$ which takes all $\Z_{2}$-knots into $\Z_{2}$-linear
combinations of equivalence classes of free knots modulo second
Reidemeister moves. This actually means that we get from knots to
graphs.

   This actually happens because there exist exactly one non-trivial
element in $\Z_{2}$.

    For arbitrary group $G$, there is a similar bracket operation which
takes $G$-knots to equivalence classes of diagrams modulo moves;
however, the target space will consist of equivalence classes of
$G$-graphs modulo not only second Reidemeister moves but also third
Reidemeister moves. Thus, the group bracket unlike the partiy
bracket, can not be treated as a graph-valued invariant, but rather,
as a functorial map for $G$-knots, similar to Turaev's delta.

      Let us be more specific.

      Let ${\cal S}_{G}$ be the set of $\Z_{2}$-equivalence classes of
$G$-graphs with no vertex marked by the unit element of the group,
modulo the second and the third Reidemeister moves.

       Let $K$ be a $G$-graph. Consider the set of all vertices of $K$
marked by the unit element of the group $G$. We define smoothings
$s$ at unit vertices of $K$ as before. Now, we set

 $$
[K]_{G}=\sum_{s_{{even,1-component}}}K_{s}\in {\cal S}_{G}.
 $$

\begin{thm}
The map ${\cal S}_{G}$ is an invariant of $G$-knots.
\end{thm}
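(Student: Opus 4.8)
The plan is to mimic, move by move, the proof of the parity-bracket invariance theorem recalled above, the essential new feature being that only \emph{unit} vertices are eligible for smoothing while the non-unit vertices are carried along inertly. Since the target ${\cal S}_G$ is taken modulo the second \emph{and} third Reidemeister moves, the contributions of these inert vertices are absorbed automatically. Concretely, I would show that $[K]_G$ is unchanged under each of the three Reidemeister moves applied to a $G$-graph $K$, organising the verification according to the labels prescribed for each move.

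For the first move the participating vertex carries the label $1$, hence it is one of the vertices we smooth. Exactly one of its two smoothings yields a diagram with the same number of components as $K$ (the other splits off a trivial circle or raises the component count), so among the one-component terms precisely one reproduces the term coming from the diagram with the kink removed; all other terms either fail the one-component condition or differ by a detour or a disjoint circle and so agree in ${\cal S}_G$. For the second move the two vertices carry inverse labels $g,g^{-1}$. If $g\neq 1$ neither vertex is smoothed and the two of them form a bigon, which is killed by the second Reidemeister relation defining ${\cal S}_G$; if $g=1$ then both are smoothed, and the four local smoothings pair off so that over $\Z_2$ the extra terms cancel and the surviving one-component terms match the smoothings of $K$ with the bigon deleted.

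The decisive case is the third Reidemeister move, with labels $a,b,c$ satisfying $a\cdot b\cdot c=1$ on one side and $a^{-1},b^{-1},c^{-1}$ on the other. Because $a\cdot b\cdot c=1$, the number of unit labels among $a,b,c$ can only be zero, one, or three; the product condition forbids exactly two, since any two units force the third. When none is a unit, none of the three is smoothed and the two local configurations are related precisely by the third Reidemeister relation in ${\cal S}_G$, so the corresponding families of smoothed terms coincide there. When all three are units, all three are smoothed, and one checks directly (as in the $\Z_2$ case) that the one-component survivors on the two sides biject while the remaining terms cancel modulo $2$.

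The main obstacle is the intermediate case, where exactly one of $a,b,c$ equals $1$ and the other two form a non-unit inverse pair: here one vertex is smoothed while the other two persist. I would handle this by fixing the smoothing at the unit vertex, tracking how its two choices interact with the surviving inverse pair, and verifying that the resulting configurations on the two sides of the move are identified in ${\cal S}_G$ by a single application of the second Reidemeister relation (on the inverse pair), possibly together with a detour move. Throughout, well-definedness requires that the notion of \emph{even} vertex and the one-component condition be stable under the local replacements; this follows because smoothing alters the component count in a controlled way and the $\Z_2$ coefficients annihilate exactly the terms where it changes undesirably.
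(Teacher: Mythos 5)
Your proposal follows essentially the same route as the paper's own proof: the same case analysis by Reidemeister move and by triviality of labels, with unit vertices smoothed, non-unit bigons and triangles absorbed by the second and third Reidemeister relations defining ${\cal S}_{G}$, and the decisive intermediate third-move case (labels $a,a^{-1},1$) resolved by smoothing the unit vertex and invoking second Reidemeister relations on the surviving inverse pair. The only cosmetic difference is bookkeeping in that last case: the paper notes that one smoothing gives identical diagrams on both sides while the other requires \emph{two} second Reidemeister moves (one on each side, to a common picture), rather than the single application you suggest, but since the target is taken modulo these moves this does not affect the argument.
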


\begin{proof}
We have to check the invariance under the three Reidemeister moves.

Let $K$ and $K'$ be two $G$-graphs which differ by a Reidemeister
move at some vertex. We shall show that $[K]=[K']$.

Assume first $K'$ is obtained from $K$ by the first Reidemeister
move which adds one vertex $v$ with trivial label. Then $[K']$
contains summands where $v$ yields a local loop. They do not count
in the bracket since they have at least two components.

All other summands in $[K']$ are in one-to-one correspondence with
all summands in $[K]$.

Assume now $K'$ is obtained from $K$ by a second Reidemeister move
where the two additional crossings in $K'$ are both non-trivial.

Then all summands in $[K']$ are in one-to-one correspondence with
those of $[K]$, and they differ exactly by the same second
Reidemeister move.

The same situation happens when $K'$ differs from $K$ by a third
Reidemeister move with three vertices of non-trivial labels: we have
a one-to-one correspondence of equal summands.

The cases when $K'$ differs from $K$ by a second Reidemeister move
with two additional trivial crossings or by a third Reidemeister
move with two additional trivial crossings are actually the same as
in the case of usual bracket; for more detail, see \cite{Parity}.

Finally, assume $K'$ differs from $K$ by a third Reidemeister move
with labels $a,b,c$ such that $b=a^{-1}$ and $c=1$ (see Fig.
\ref{rmoves}). In the right hand side, we will have labels
$a^{-1},b^{-1}=a$, and  $1^{-1}=1$.

Now we see that one  smoothing at the vertex of $K$ labeled $1$
coincides identically with the similar smoothing of $K$; the other
smoothing of $K$ differs by the similar smoothing of $K'$ by two
second Reidemeister moves.

\end{proof}

\subsection{Coverings and Projections}

First, let $\alpha:G\to H$ be a homomorphism of two groups. Then,
for every $G$-graph $\Gamma$, we get an $H$-graph
$\alpha_{*}(\Gamma)$ by changing each vertex label $g\in G$ to
$\alpha(g)\in H$.

From the definition of $G$-knots we get to the following
\begin{thm}
The operation $\alpha_{*}$ leads to a well defined mapping from
$G$-knots to $H$-knots, i.e., if $\Gamma_{1},\Gamma_{2}$ are two
equivalent $G$-graphs then their images $\alpha_{*}(\Gamma_{1})$
$\alpha_{*}(\Gamma_{2})$ are two equivalent $H$-graphs.
\end{thm}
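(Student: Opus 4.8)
The plan is to show that $\alpha_*$ respects each of the three Reidemeister moves together with the algebraic side-conditions attached to them, so that equivalent $G$-graphs map to equivalent $H$-graphs. Since any two equivalent $G$-graphs $\Gamma_1,\Gamma_2$ are connected by a finite sequence of Reidemeister moves, it suffices to treat a single move: if $\Gamma_2$ is obtained from $\Gamma_1$ by one move, I would verify that $\alpha_*(\Gamma_1)$ and $\alpha_*(\Gamma_2)$ are related by the \emph{same} move performed on the relabeled diagram. Because $\alpha_*$ changes only the vertex labels and leaves the underlying framed $4$-graph, its source-sink structure, and the planar picture untouched, the purely combinatorial content of each move is preserved automatically. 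The only thing requiring checking is that the group-theoretic constraints defining a legal move (Fig.\ \ref{rmoves}) continue to hold after applying $\alpha$.

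The verification then reduces to three elementary algebraic facts, used one per move. For the first Reidemeister move, the distinguished vertex carries the unit $1\in G$; since a homomorphism sends the identity to the identity, $\alpha(1)=1\in H$, so the relabeled vertex is again a legal first-move vertex. For the second Reidemeister move, the two vertices carry mutually inverse labels $g$ and $g^{-1}$; because $\alpha(g^{-1})=\alpha(g)^{-1}$, their images remain inverse in $H$, so the second-move condition survives. For the third Reidemeister move, the three labels $a,b,c$ satisfy $a\cdot b\cdot c=1$ in the left-hand side (and appear as $a^{-1},b^{-1},c^{-1}$ on the right); applying $\alpha$ gives $\alpha(a)\alpha(b)\alpha(c)=\alpha(abc)=\alpha(1)=1$ in $H$, and likewise $\alpha(a^{-1})=\alpha(a)^{-1}$ etc., so the right-hand labels are still the correct inverses. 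I would also note that $\alpha_*$ preserves the cyclic order of the three vertices along the source-sink orientation, since it does not touch the orientation, so the crucial ordering requirement ($a,b,c$ and not $a,c,b$) is respected.

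Having checked all three moves, I would conclude that $\alpha_*$ descends to the quotient by the Reidemeister equivalence, giving a well-defined map from $G$-knots to $H$-knots. I anticipate no serious obstacle here: the entire argument rests on the defining properties of a group homomorphism, namely $\alpha(1)=1$ and $\alpha(xy)=\alpha(x)\alpha(y)$ (and the derived $\alpha(x^{-1})=\alpha(x)^{-1}$). If anything deserves care, it is the bookkeeping for the first move, where I should confirm that $\alpha_*$ is genuinely well-defined on the level of graphs before passing to equivalence classes — that is, that relabeling commutes with the move rather than merely being compatible with it. But this is immediate from the fact that $\alpha_*$ acts pointwise on labels and is the identity on the underlying combinatorial structure, so the main content of the theorem is simply the observation that homomorphisms preserve exactly the three relations $1\mapsto 1$, inverses, and products that cut out the legal Reidemeister moves.
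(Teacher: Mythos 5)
Your proof is correct and is exactly the argument the paper intends: the paper offers no written proof beyond the remark that the theorem follows ``from the definition of $G$-knots,'' and your verification---that $\alpha(1)=1$, $\alpha(g^{-1})=\alpha(g)^{-1}$, and $\alpha(abc)=\alpha(a)\alpha(b)\alpha(c)=1$ preserve the side-conditions of the three Reidemeister moves, while the underlying framed $4$-graph and its source-sink structure are untouched---is precisely the omitted routine check. Nothing is missing; you have simply written out what the paper leaves implicit.
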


Below, we briefly sketch the idea how to construct a covering for
$G$-links for some subgroup $G'$ of a group $G$; we define the
corresponding projection in the general case; as for the covering,
we define it only in the abelian case; in the general case it is
defined quite analogously (see also subsection \ref{subscurves}
curves in $2$-surfaces).

Now, let $G'$ be a subgroup of a group $G$. Then for every $G$-graph
$\Gamma$, we can define the $G'$-graph $\beta_{G'}(\Gamma)$ obtained
from $\alpha_{*}$ by removing all vertices of $\Gamma$ whose labels
do not belong to $G'$; the remaining vertices will keep their labels
considered now as elements of $G'$.

\begin{thm}
The operation $\beta_{G'}(\Gamma)$ is a well defined mapping from
$G$-knots to $G'$-knots.
\end{thm}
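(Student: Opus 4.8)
The plan is to verify that the map $\beta_{G'}$ respects each of the three Reidemeister moves, so that if $\Gamma_1$ and $\Gamma_2$ are equivalent $G$-graphs then $\beta_{G'}(\Gamma_1)$ and $\beta_{G'}(\Gamma_2)$ are equivalent $G'$-graphs. Since equivalence of $G$-graphs is generated by the three moves, it suffices to treat a single move relating $\Gamma_1$ to $\Gamma_2$ and check that applying $\beta_{G'}$ to both sides either produces identical $G'$-graphs or produces $G'$-graphs differing by a sequence of Reidemeister moves. The key observation throughout is that $\beta_{G'}$ deletes exactly those vertices whose labels lie outside $G'$, via the vertex removal operation, and that the subgroup axioms control which labels survive.

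First I would treat the first Reidemeister move. The added or removed vertex carries the unit label $1$, which always belongs to $G'$; hence that vertex survives under $\beta_{G'}$, and the two images differ again by a first Reidemeister move. Next I would treat the second Reidemeister move, where the two participating vertices carry inverse labels $a$ and $a^{-1}$. Here the crucial point is that since $G'$ is a subgroup, $a \in G'$ if and only if $a^{-1} \in G'$; thus the two vertices are deleted together or retained together. If both are retained, the images differ by the same second move; if both are deleted, then on each side the vertex removal reconnects the opposite half-edges, and I would check that the resulting $G'$-graphs are literally equal (the bigon simply collapses to parallel edges on both sides). Either way invariance holds.

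The third Reidemeister move is where the main obstacle lies. The three vertices carry labels $a,b,c$ with $a\cdot b\cdot c = 1$ on the left and $a^{-1},b^{-1},c^{-1}$ on the right. Unlike the second move, subgroup membership of the triple is not all-or-nothing: it is possible that some of $a,b,c$ lie in $G'$ while others do not, and I must enumerate the cases according to how many of the three labels belong to $G'$. The constraint $abc=1$ helps, since if two of the three labels lie in $G'$ then so does the third (the product of two elements of $G'$ and its forced value lie in $G'$); so the genuinely mixed cases are exactly those where either all three survive, none survive, or exactly one survives. In the all-survive case the images differ by a third Reidemeister move (the condition $a^{-1}b^{-1}c^{-1}$ matching the required opposite labels must be checked against the stated cyclic-order convention). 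In the none-survive case all three vertices are removed on both sides and I would verify the two resulting $G'$-graphs coincide. The delicate case is exactly one surviving vertex: after deleting the other two by vertex removal, I expect the single retained vertex to appear on both sides with matching labels, but the surrounding strand topology produced by the two deletions must be shown to agree, possibly up to second Reidemeister moves absorbed by the reconnection.

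Assembling these cases gives invariance under all three moves, and since the moves generate the equivalence relation, $\beta_{G'}$ descends to a well-defined map from $G$-knots to $G'$-knots. I expect the third-move analysis, and specifically the single-surviving-vertex subcase, to be the step requiring the most care; the rest reduces to the subgroup closure properties $1\in G'$, $a\in G' \Leftrightarrow a^{-1}\in G'$, and closure under products.
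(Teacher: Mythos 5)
Your overall strategy (reduce to the three moves; use $1\in G'$, closure under inverses, and closure under products; enumerate the third-move cases as $0$, $1$, or $3$ surviving vertices) is the natural one, and your treatment of the first move, the second move, and the all-survive and none-survive third-move cases is correct; in the none-survive case the two images indeed coincide on the nose, since removing all three crossings leaves three mutually non-crossing strands with the same endpoint connections on both sides. For comparison: the paper states this theorem with no proof whatsoever, so your attempt can only be measured against the paper's definitions.

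However, the case you yourself flag as delicate --- exactly one surviving vertex --- contains a genuine error, and it is fatal. By the paper's convention the third move replaces labels $a,b,c$ (with $a\cdot b\cdot c=1$) by $a^{-1},b^{-1},c^{-1}$; this is exactly why the paper is forced to define $\Delta_{g}=\Delta_{g^{-1}}$ to obtain an invariant. So when exactly one label, say $a\in G'$, survives the projection, the image of the left-hand side retains that vertex labeled $a$ while the image of the right-hand side retains it labeled $a^{-1}$: the labels do \emph{not} match, contrary to your expectation (the underlying unlabeled graphs do agree, so the ``strand topology'' worry is the harmless part). And no sequence of $G'$-moves can repair the mismatch in general: for abelian $G'$ the product of all vertex labels is an invariant of $G'$-knots (unchanged by the first move, by the second move since $d\cdot d^{-1}=1$, and by the third move since the triple's product is $1$ on both sides). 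Now take $G=\Z$, $G'=2\Z$, and close up an R3 triangle into the trefoil shadow (which is good) with labels $2,1,-3$; the third move yields labels $-2,-1,3$; projecting the two sides gives the one-vertex graph $F_{1}$ labeled $2$ on one side and labeled $-2$ on the other, whose product invariants differ, so they are inequivalent $2\Z$-knots. Hence the one-survivor case cannot be completed as you hope, and in fact the theorem as literally stated fails there: the argument closes up only when every element of $G'$ equals its own inverse (e.g.\ the parity case $G'\cong\Z_{2}$ or $G'=\{1\}$, which is the situation established in the author's earlier work), or if the target is weakened to $G'$-knots with vertex labels regarded only up to inversion.
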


Thus, we have constructed a map $\beta_{G'}$ which {\em removes}
some nodes. In particular, this projection maps knots to knots.

In the simplest case when $G=\Z_{2}, G'=\{1\}$, the {\em parity
projection} map can be justified to a parity covering map. Namely,
the projection map takes knots to $2$-component links, and
generally, $l$-component links to $2l$-component links with two sets
of $l$ components each. Over each crossings of the initial knot $X$,
there are two crossings $X_{1}$ and $X_{2}$, of the obtained link.
 The  {\em
odd} crossings (those marked by non-trivial elements of $\Z_{2}$
which disappear under the projection map), will belong to two
different components (sets of components) \cite{NewParity} (``mixed
crossings''). Thus, over each crossing marked by the trivial
element, there will be two {\em pure} crossings belonging to one
component (in the case of links, to one set of components),
crossings marked by the non-trivial element will be covered by two
crossings, each being mixed (i.e., belonging to two different sets
of components).

Now, let $G$ be an abelian group. Let $G'$ be a subgroup\footnote{In
the case of general $G$, one should consider normal subgroups only}
of $G$ and let $H$ be the quotient group so that we have a short
exact sequence

$$G'\stackrel{\beta}{\to} G\stackrel{\alpha}{\to} H.$$

Let $(L,f)$ be a $G$-graph with $k$ components; we denote the
$G$-link represented by $L,k$ by the same letter $L$. We shall
construct the {\em the covering} corresponding to the inclusion
$\beta:G\to G'$, as follows.

This covering will be a $H$-link having $k\cdot |H|$ components.
This link will be denoted by $L^{\beta}$.

With each vertex $X$ of $(L,f)$ with label $g\in G,$ where
$g=g'h,g'\in G',h\in H$, we  shall associate $H$ vertices $X^{h},
h=\alpha(g)\in H$ indexed by elements of $H$; all these vertices
will have label $g'$.

Now, with every edge $XY$ connecting some two vertices $X,Y$ of $L$
we associate $|H|$ edges, namely, $X^{g}$ is connected to $Y^{gh}$
where $h=\alpha(g)$.

\begin{thm}
This map $L\to L^{\beta}$ is a well defined map from $G$-free links
to $G'$-free links.
\end{thm}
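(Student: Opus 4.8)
The plan is to treat $L\mapsto L^{\beta}$ as the combinatorial analogue of the functoriality of coverings under homotopy: the $|H|$ sheets of $L^{\beta}$ play the role of the fibre, the sheet index is shifted by $\alpha$ of the relevant label as one traverses an edge (matching the rule ``$X^{t}$ is joined to $Y^{t\alpha(g)}$'' of the statement), and the residual $G'$-part of each label decorates the lifted vertex. Concretely, I would first fix a set-theoretic section $s\colon H\to G$ of $\alpha$ and write each label as $g=g'\,s(\alpha(g))$ with $g'\in G'$, so that the construction is unambiguous. The first step is purely structural: verify that $L^{\beta}$ really is an oriented good framed $4$-graph carrying a $G'$-labeling. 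This means checking that the opposite-edge (framing) structure lifts sheet-wise, that each source--sink orientation of $L$ induces one on every sheet, that the four half-edges at each lifted vertex are paired correctly, and that following the rotating cycles shows the fibre over each component decomposes into the asserted $k\cdot|H|$ components (equivalently, that the total sheet-monodromy around each component is trivial).

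The heart of the argument is the invariance step: for each Reidemeister move relating $G$-graphs $L$ and $L'$ I would exhibit a sequence of moves relating $L^{\beta}$ and $(L')^{\beta}$, proceeding move by move exactly as in the proof of the group-bracket theorem above. For the first move the added vertex carries the label $1$, so $\alpha(1)=1$ forces no sheet shift and its $G'$-part is $1$; hence the cover acquires $|H|$ disjoint kinks, each a legitimate first Reidemeister move on its own sheet. For the second move the two labels are inverse in $G$, their images are inverse in $H$ (so the sheet shifts cancel and the bigon lifts to $|H|$ bigons), and one checks that the two lifted labels on each sheet are inverse in $G'$, so $|H|$ second Reidemeister moves suffice. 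The third move is where the sheet bookkeeping must be done with care: since $\alpha(a)\alpha(b)\alpha(c)=\alpha(abc)=1$, the three sheet shifts compose to the identity around the triangle, so it lifts to $|H|$ closed triangles in the cyclic order dictated by the source--sink structure (this is exactly why the earlier insistence on the order $a,b,c$ is needed on every sheet).

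The step I expect to be the main obstacle is reconciling the \emph{group relations} required to perform the lifted moves with the choice of section. Computing the residual labels of the lifted triangle gives $a',b',c'\in G'$ with $a'b'c'=(s(\alpha(a))s(\alpha(b))s(\alpha(c)))^{-1}$, so that the ``defect'' $d:=s(\alpha(a))s(\alpha(b))s(\alpha(c))$ lies in $G'$ (because its $\alpha$-image is trivial) but need not equal $1$ unless $s$ is a homomorphism; for a general abelian extension no such splitting exists. Here I would use commutativity of $G$ decisively: since $d$ depends only on the $H$-images and not on the $G'$-parts, it can be transported along the edges of the triangle and absorbed, so that $L^{\beta}$ is carried to $(L')^{\beta}$ by the $|H|$ third Reidemeister moves together with auxiliary second moves that soak up the representative discrepancies---precisely the mixed mechanism used for the ``$b=a^{-1},\,c=1$'' case in the group-bracket proof.

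Finally I would check that changing the section $s$ (or the choice of coset representatives on individual edges) alters $L^{\beta}$ only by a global relabeling and by second Reidemeister moves, so the map is independent of all choices and descends to equivalence classes; the degenerate second- and third-move cases with trivial labels are then handled verbatim as in \cite{Parity}.
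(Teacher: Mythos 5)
Your overall strategy---lift each Reidemeister move sheet-wise, using the fact that the product of labels around the loop, bigon, or triangle is trivial in $G$, hence has trivial image in $H$ (so the configuration closes up on each sheet) and trivial ``$G'$-part'' (so the lifted labels satisfy the relations required by the $G'$-moves)---is exactly the paper's proof. The difference is that the paper takes the decomposition $g=g'h$, $g'\in G'$, $h\in H$, as given, i.e.\ it implicitly works with a split extension $G\cong G'\oplus H$, in which case ``the $G'$-factor'' is a genuine homomorphism, all of your defects vanish identically, and the sheet-wise lifting closes without further argument. You instead insist on an arbitrary set-theoretic section $s\colon H\to G$, and that is where your proof breaks.

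The gap is concrete. First, your claim that for the second move ``the two lifted labels on each sheet are inverse in $G'$'' is false for a general section: the product of the two lifted labels equals $\bigl(s(h)\,s(h^{-1})\bigr)^{-1}$ with $h=\alpha(g)$, which lies in $G'$ but need not be the unit. Take $G=\Z_{4}$, $G'=\{0,2\}$, $H=\Z_{2}$, $s(1)=1$: a bigon labeled $1,3$ lifts to bigons labeled $0,2$, which are not inverse in $G'$, so the lifted bigon cannot be cancelled by a second move; and no choice of $s$ repairs this, since one would need $2\,s(1)=0$ in $\Z_{4}$, which fails for both $s(1)=1$ and $s(1)=3$. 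Second, your proposed repair for the third move---that the defect $d=s(\alpha(a))s(\alpha(b))s(\alpha(c))$ ``can be transported along the edges of the triangle and absorbed'' by auxiliary second moves---is not a legal operation in this theory: vertex labels are fixed data, no Reidemeister move multiplies a label by a group element, and second moves only create or cancel pairs of \emph{inverse}-labeled vertices. The appeal to the ``$b=a^{-1}$, $c=1$'' case of the group-bracket proof does not transfer either: there the trivially labeled vertex was \emph{smoothed} inside a bracket sum, not relabeled. So your argument actually proves the theorem only when the extension splits---which is what the paper tacitly assumes by writing $g=g'h$---whereas for a non-split abelian extension the section-based construction you describe fails invariance already at the second move, and one would need either a genuinely different construction of the cover or a restriction of the hypothesis.
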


\begin{proof}
This theorem follows from the consideration of Reidemeister moves:
when looking at the product of labels along the loop, bigon, or
triangle, we see that these products are all equal to the unit of
the group $G$, which means that both ``the $G'$-factor'' and ``the
$H$-factor'' of them are units of $G'$ and $H$, respectively. This
means that the corresponding edges of $L^{\beta}$ will close up to a
loop, bigon or triangle, repectively. Hence, the Reidemeister move
applied to $L$ will result in the corresponding Reidemeister move
applied to $L^{\beta}$.
\end{proof}

\section{Further Applications and Examples}

\subsection{Minimal Crossing Number}

Consider the example shown in Fig. \ref{F1}.

\begin{figure}
\centering\includegraphics[width=200pt]{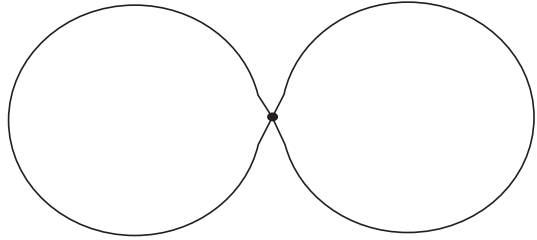} \caption{The graph
$F_{1}$}\label{F1}
\end{figure}

For every group $G$, the framed graph $F_{1}$ is equivalent to the
unknot if the label on the unique vertex is equal to the unit of the
group $G$ as a $G$-knot.

\begin{thm}
For every non-trivial group $G$, there exists a non-trivial labeling
$f$  of the graph $F_{1}$ such that the $G$-knot $(F_{1},f)$ is not
equivalent to the unknot.
\end{thm}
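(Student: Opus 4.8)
The plan is to find an invariant that can distinguish $(F_1,f)$ with a nontrivial label from the unknot. The graph $F_1$ has a single vertex, so it is a one-crossing diagram which, as a \emph{shadow} (forgetting labels), is just the trivial free knot --- any invariant that ignores labels will see nothing. The whole point of the $G$-labeling is that the single vertex carries a group element $g\neq 1$, and the key fact from the excerpt is that the group bracket and, more usefully here, the covering map $L\mapsto L^{\beta}$ are genuine invariants of $G$-knots that \emph{remember} the labels. So the strategy is: pick a nontrivial $g\in G$, and produce an invariant of the resulting $G$-knot that is provably different from its value on the unknot.

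\textbf{First} I would exploit the rotating-cycle / conjugacy-class data attached to a $G$-graph. Recall from the excerpt that a rotating cycle of an oriented $G$-graph inherits an orientation and, with no chosen basepoint, determines a conjugacy class in $G$ given by the product of the vertex labels traversed. For the one-vertex graph $F_1$, the unicursal component is a single rotating cycle passing through the unique vertex, so the associated conjugacy class is simply the class of $g$ (or of some fixed power of $g$ determined by how the loop wraps the vertex). The unknot, having no vertices, yields the trivial conjugacy class. The claim that this \emph{product of labels along the rotating cycle} is invariant under the three Reidemeister moves is exactly the content of the labeling conventions imposed in Fig.~\ref{rmoves}: the first move inserts a vertex labeled $1$ (no change to the product), the second move inserts a pair $a,a^{-1}$ (which cancel), and the third move replaces $a,b,c$ with $abc=1$ by $a^{-1},b^{-1},c^{-1}$ (again consistent with the cyclic product). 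Hence the conjugacy class of the rotating-cycle product is a bona fide $G$-knot invariant, and choosing $g\neq 1$ makes it nontrivial, so $(F_1,f)$ cannot be equivalent to the unknot.

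\textbf{Alternatively}, and perhaps more in the spirit of the tools already proved, I would invoke the covering theorem: take $G'=\{1\}$ (or, if $G$ is nonabelian, a suitable normal subgroup with nontrivial quotient $H$) and apply $L\mapsto L^{\beta}$ to $(F_1,f)$. Since this is a well-defined map from $G$-free links to $G'$-free links, and the unknot maps to a split/trivial configuration, it suffices to check that the cover of $F_1$ over a vertex labeled $g\neq 1$ produces a nontrivial free-link shadow --- concretely, an $H$-link whose $k\cdot|H|$ components are linked in a way that no invariant of the trivial link exhibits. For the simplest case $G=\Z_2$ this is precisely the parity covering described in the excerpt, where an odd crossing covers to two \emph{mixed} crossings joining distinct components; the resulting two-component link is nonsplit, which the unknot's cover is not.

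\textbf{The main obstacle} I anticipate is the nonabelian case. The covering construction is stated cleanly only for abelian $G$ (and the definition of $\beta_{G'}$ requires $G'$ normal), so for a general nontrivial group I would fall back on the conjugacy-class invariant of the preceding paragraph, which requires no abelianness and only the bare Reidemeister conventions. The one subtlety to verify carefully is how the rotating cycle of $F_1$ passes through its single $4$-valent vertex --- whether the unicursal component traverses the vertex once or twice, which determines whether the recorded product is $g$ or $g^2$ (or a commutator-type expression). If it happens to be a proper power, I would simply choose $f$ so that that power is nontrivial in $G$; for any nontrivial $G$ such a choice of $g$ exists, since if every element satisfied the relevant power relation the group would be forced to be trivial or would still contain an element with nontrivial conjugacy-class product, completing the argument.
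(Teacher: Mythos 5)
Your proposal takes a genuinely different route from the paper --- the paper's entire proof is one line: choosing $f(v)=g\neq 1$, the invariant $\Delta_{G}$ (the group Turaev delta, which sums oriented smoothings only over crossings with \emph{non-trivial} label) evaluates on $(F_{1},f)$ to a single summand, hence is non-zero, while on the unknot it is the empty sum --- but both of your substitute arguments contain genuine gaps. The central error in your first argument is a conflation of two different cycles. The relation $a\cdot b\cdot c=1$ in the third Reidemeister move is imposed on the \emph{rotating cycle} bounding the triangle (a rotating cycle turns at each vertex; in the surface picture these are exactly the face relators). Your product, however, is taken along the \emph{unicursal component}, which goes \emph{straight} through each vertex and therefore visits every vertex twice; the unicursal component of $F_{1}$ is not a rotating cycle, and it does not even inherit a consistent orientation from the source-sink structure. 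The triangle relation does not transport to this curve word: take the trefoil shadow with Gauss code $123123$ and labels $p,q,r$ with $pqr=1$ on its triangle; before the move the curve word is $(pqr)^{2}=1$, while after the move (each strand now meets its two crossings in the opposite order, with inverted labels) the curve word becomes conjugate to $p^{-1}qpq^{-1}$, which is non-trivial whenever $p$ and $q$ do not commute. So your quantity is not invariant under the third move for non-abelian $G$. For abelian $G$ it \emph{is} invariant, but its value on $(F_{1},f)$ is $g^{2}$ (each vertex is traversed twice), and your fallback claim --- that every non-trivial group contains an element for which ``the relevant power'' is non-trivial --- is false exactly where it matters most: $\Z_{2}$, and every elementary abelian $2$-group, is a non-trivial group of exponent two. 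Thus your first argument misses the most basic case of the theorem, the parity case $G=\Z_{2}$.

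Your second (covering) argument fails for $\Z_{2}$ as well. The double cover of $(F_{1},g)$, $g\neq 1\in\Z_{2}$, consists of two circles meeting at the two lifts $X^{0},X^{1}$ of the crossing; among the four lifted edges joining $X^{0}$ to $X^{1}$, one edge from each component forms a bigon, so a single second Reidemeister move reduces the cover to the two-component unlink --- which is precisely the cover of the unknot. Your assertion that the cover is ``nonsplit'' is incorrect: free links carry no such obstruction here, and the $2$-crossing Hopf shadow is a trivial free link. (More generally, the covering sees $(F_{1},g)$ essentially only through the number of components of the lifted curve, which is governed by the order of $g^{2}$ --- again trivial in exponent-two groups, so the two approaches fail simultaneously on the same class of groups.) The paper's use of $\Delta_{G}$ avoids all of this: because unit-labelled crossings are excluded from the defining sum, the first Reidemeister move needs no quotient by links with a trivial component, and the single non-trivially-labelled crossing of $F_{1}$ contributes one non-zero summand regardless of whether $g^{2}=1$.
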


Indeed, $\Delta(F_{1})$ has one non-trivial summand.

Now, let us consider the example shown in Fig. \ref{F2}.

\begin{figure}
\centering\includegraphics[width=200pt]{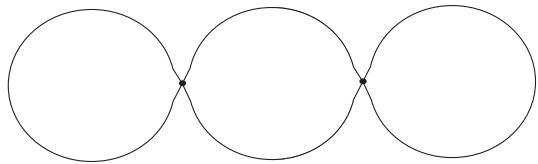} \caption{The graph
$F_{2}$} \label{F2}
\end{figure}

As well as $F_{1}$, the framed $4$-graph $F_{2}$ is trivial for the
$1$-element group. Moreover, if we take $\Z_{2}$, then for every
labeling of $F_{2}$,

Thus, $F_{2}$ is not minimal for $\Z_{2}$.

\begin{thm}
For every group $G$ containing at least three elements, there exists
a labeling $f$ of vertices of $F_{2}$ which makes this labeled graph
minimal in its $G$-knot class.
\end{thm}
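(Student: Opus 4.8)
The plan is to exploit the group bracket $[\cdot]_{G}$ as a source of a crossing-number lower bound, in exactly the way the parity bracket yields minimality for odd irreducible $\Z_{2}$-graphs. The diagram $F_{2}$ is a two-vertex bigon, so the third Reidemeister move (which needs a triangle of three vertices) cannot act on it at all, and the only move that can decrease the number of crossings is the second one, which is available precisely when the two vertices carry mutually inverse labels. In $\Z_{2}$ the unique non-trivial element is an involution, so any labeling making both vertices non-trivial forces equal, hence mutually inverse, labels and an immediate decreasing second Reidemeister move; this is exactly why $F_{2}$ fails to be minimal over $\Z_{2}$. The content of the hypothesis $|G|\ge 3$ is that it always supplies a pair of labels escaping this trap.

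First I would choose the labeling. I claim that whenever $|G|\ge 3$ there exist $a,b\in G$ with $a\ne 1$, $b\ne 1$ and $ab\ne 1$, and I assign $a,b$ to the two vertices of $F_{2}$. If $G$ has an element $a$ of order at least three, then $a\ne a^{-1}$, so $a^{2}\ne 1$ and one may take $b=a$. Otherwise every non-trivial element is an involution, so $G$ is an elementary abelian $2$-group of rank at least two; picking two distinct non-trivial elements $a\ne b$ gives $ab\ne 1$, since $ab=1$ would force $b=a^{-1}=a$. In either case $(F_{2},f)$ has no unit-labeled vertex and admits no decreasing second Reidemeister move.

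Next I would run the group bracket. Since $(F_{2},f)$ has no unit vertices, the definition of $[\cdot]_{G}$ gives $[(F_{2},f)]_{G}=(F_{2},f)$ as an element of ${\cal S}_{G}$, and by the choice of $a,b$ this element is irreducible modulo the second and third Reidemeister moves, so its minimal representative in ${\cal S}_{G}$ has exactly two vertices. Now let $K'$ be any $G$-graph equivalent to $(F_{2},f)$, with $n$ vertices. By the invariance of the group bracket, $[K']_{G}=[(F_{2},f)]_{G}=(F_{2},f)$ in ${\cal S}_{G}$. But $[K']_{G}$ is a $\Z_{2}$-linear combination of graphs $K'_{s}$ obtained from $K'$ by smoothing at unit vertices, and smoothing only deletes vertices, so each $K'_{s}$ has at most $n$ vertices. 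For this combination to equal the non-zero class of $(F_{2},f)$, some summand $K'_{s}$ must lie in that class, whence its minimal representative has two vertices and $n\ge 2$. Since $F_{2}$ itself realizes two crossings, the minimal crossing number of $(F_{2},f)$ equals two and the labeled graph is minimal.

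The step I expect to require the most care is the irreducibility of $(F_{2},f)$ inside ${\cal S}_{G}$: one must confirm that \emph{no} sequence of second and third Reidemeister moves, not merely a single move, can lower its two crossings. For a two-vertex bigon this is essentially forced, the third move being inapplicable and the second blocked by $ab\ne 1$, but the argument should be phrased for the passage through the quotient ${\cal S}_{G}$ rather than for the bare diagram. A secondary technical point is to pin down, from the definition of $[\cdot]_{G}$, which smoothings actually contribute, and to observe that the lower bound is insensitive to this once one uses only that smoothing never increases the number of vertices. As an alternative to the group bracket, the split Turaev delta $\Delta_{g}$ with $g=a$ gives the same obstruction, since $\Delta_{g}(F_{2},f)$ is a non-trivial two-component free link that cannot arise from a one-crossing diagram.
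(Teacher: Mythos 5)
You are right that the crux is the choice of labels, and your existence argument for $a,b\neq 1$ with $ab\neq 1$ whenever $|G|\ge 3$ is correct; note also that the paper states this theorem with no proof at all, so there is nothing of the author's to compare against. The genuine gap is the step you flag and then wave through: the assertion that the class of $(F_{2},f)$ in ${\cal S}_{G}$ has no representative with fewer than two vertices. Saying that ``the third move is inapplicable and the second is blocked by $ab\neq 1$'' only excludes decreasing moves applied to the two-vertex diagram itself; equivalence in ${\cal S}_{G}$ also allows an increasing second move, then third moves, then cancellations in a completely different pattern, and nothing in your text excludes that such a detour ends at the crossingless circle. Ruling out detours is the entire content of the theorem (it is why brackets are needed at all), so it must be proved, not declared. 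It can be proved: for any subset $S\subseteq G\setminus\{1\}$ closed under inversion, the number mod $2$ of vertices with label in $S$ is unchanged by the second move (a pair $g,g^{-1}$ lies wholly inside or wholly outside $S$) and by the third move (labels are merely inverted), and both moves preserve the parity of the total number of vertices. If $|G|\ge 4$ one can pick $b\notin\{a,a^{-1}\}$, and the two counts for $S=\{a,a^{-1}\}$ and $S'=\{b,b^{-1}\}$ force at least two vertices in any representative; the only remaining group, $\Z_{3}$, is abelian, where the product of all labels is invariant under the second and third moves and excludes zero-vertex representatives, while vertex-count parity excludes one-vertex ones. With such a lemma inserted, your bracket argument does yield the theorem.

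A second, outright error is your fallback. Since $F_{2}$ is a \emph{good} one-component framed $4$-graph with two vertices, it is forced to be the diagram with a small loop (kink) at each vertex --- equivalently, the circle with a single second Reidemeister move applied to it; it is not a kink-free bigon. Consequently the oriented smoothing at either vertex splits off a crossingless circle, so every summand of $\Delta_{g}(F_{2},f)$ is a two-component link with a trivial component and hence vanishes in ${\cal L}$; thus $\Delta_{g}(F_{2},f)=0$ for all $g$. This vanishing is built into the construction --- killing precisely such summands is what makes $\Delta$ invariant under the first Reidemeister move --- so the Turaev delta cannot detect $(F_{2},f)$ and cannot replace the missing lemma.
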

\subsection{Curves in $2$-surfaces}

\label{subscurves}

In the present section, we give the main example where the above
theory comes from.

Let $\Sigma$ be a closed oriented $2$-surface of genus $g$. We say
that an embedding $f$ of a framed $4$-graph $\Gamma$ in $\Sigma$ is
cellular if $\Sigma\backslash \Gamma$ is the disjoint union of
$2$-cells.

We say that a cellular embedding $f:\Gamma \to \Sigma$ is {\em
checkerboard} if one can colour all cells of $\Sigma$ with $2$
colours in a way such that no two cells of the same colour are
adjacent along an edge.

The following fact can be proved by the reader as a simple exercise.
\begin{exs}
If a framed $4$-graph $\Gamma$ admits a checkerboard cellular
embedding into some $\Sigma$ then $\Gamma$ admits a source-sink
structure.

Moreover, if $\Gamma$ admits a source-sink structure then every
cellular embedding of $\Gamma$ in every $\Sigma$ (of any genus) is
checkerboard.
\end{exs}

Now, let us fix a framed graph $\Gamma$ and its checkerboard
cellular embedding $f$ into $\Sigma=S_{g}$. Consider the group
$G(\Gamma,f)$ given by the following presentation $P$.

The generators of $P$ are in one-to-one correspondence with vertices
of $\Gamma$.

The relaions of $P$ are in one-to-one-correspondence with cells of
$\Sigma\backslash f(\Gamma)$. More precisely, with each cell $C$,
one
can associate its boundary which is a {\em rotating cycle} 
on $\Gamma$. Since $\Gamma$ admits a source-sink structure, this
cycle becomes naturally oriented. This means, that all vertices of
this cycle are cyclically ordered. This generates a cyclic word
which we take to be the relator corresponding to the cell.

Now, assume $\Gamma$ has one unicursal component. Then $f(\Gamma)$
can be thought of as the image of an immersed curve; we shall call
this image $\gamma$.

In \cite{Nikonov}, I.M.Nikonov proved the following
\begin{thm}
If $\Gamma$ has one unicursal component, then the group
$G(\Gamma,f)$ is isomorphic to the quotient group of the fundamental
group $\pi_{1}(\Sigma)$ by the free homotopy class $[\gamma]$.
\end{thm}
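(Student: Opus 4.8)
The plan is to construct an explicit isomorphism by reading off, along any loop, the double points at which it \emph{turns}. Since every complementary region is a $2$-cell, the inclusion $\Gamma\hookrightarrow\Sigma$ induces a surjection $\pi_{1}(\Gamma)\twoheadrightarrow\pi_{1}(\Sigma)$, so I may represent every class by an edge-path in $\Gamma$ based at a vertex. At a double point the framing records precisely which pairs of half-edges are continuations of the \emph{same} branch of $\gamma$ (the opposite ones) and which belong to the \emph{two} branches (the adjacent ones). I would define a homomorphism $\psi$ from $\pi_{1}(\Gamma)$ to $G(\Gamma,f)$ by sending an edge-path to the word obtained by writing the generator $v$ each time the path runs along \emph{non-opposite} edges at $v$ (switches branch, a \emph{turn}) and writing nothing when it runs through opposite edges (goes straight). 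This is exactly the datum that the source--sink orientation converts into a cyclic order, so the orientations fix whether a turn contributes $v$ or $v^{-1}$.

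First I would show that $\psi$ descends to $\pi_{1}(\Sigma)\slash\langle\langle[\gamma]\rangle\rangle$. The boundary of each region is a rotating cycle, hence it turns at every one of its corners; thus $\psi$ sends each region-boundary loop to the cyclic product of its corner vertices, which is by construction the defining relator of $G(\Gamma,f)$ and therefore trivial. Moreover the curve $\gamma$ runs \emph{straight} through each of its own self-crossings, so $\psi([\gamma])$ is the empty word. It then remains to check that $\psi$ is insensitive to the choice of edge-path representing a given class, i.e. invariant under the elementary homotopies relating two edge-paths: an isotopy in the complement of $\Gamma$, a finger move across an edge, a move past a vertex, and a move pushing an arc across a region. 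The across-a-region move replaces part of an edge-path by the complementary part of that region's boundary and hence alters the turn-word by exactly one region relator; the remaining moves either leave the turn-word unchanged or insert a cancelling pair $vv^{-1}$. Granting these verifications, $\psi$ factors through $\pi_{1}(\Sigma)\slash\langle\langle[\gamma]\rangle\rangle$.

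Next I would build the inverse $\phi\colon G(\Gamma,f)\to\pi_{1}(\Sigma)\slash\langle\langle[\gamma]\rangle\rangle$ by sending each generator $v$ to the class of an elementary loop that approaches $v$ along one branch and leaves along the adjacent one (a single turn at $v$), the approach and return being run \emph{straight}. To see that $\phi$ respects the relations, note that concatenating the elementary turns around a region traces out that region's boundary, which bounds the region $2$-cell and is therefore null-homotopic in $\Sigma$ already before passing to the quotient; hence every defining relator of $G(\Gamma,f)$ maps to $1$. Finally $\psi\circ\phi=\mathrm{id}$ on generators, since the turn-word of an elementary turn at $v$ is $v$; and $\phi\circ\psi=\mathrm{id}$ because a loop is, up to homotopy, the concatenation of its successive turns joined by straight arcs, and modulo $[\gamma]$ the straight arcs contribute nothing. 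This yields the asserted isomorphism.

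The main obstacle is the homotopy-invariance bookkeeping of the second step: one must put a general loop, and a homotopy between two of its representatives, into good position with respect to $\Gamma$, enumerate the elementary moves, and verify in each case that the turn-word changes only by region relators or by cancelling pairs. This is where the source--sink structure is genuinely used, both to guarantee that the region words are honest cyclic words and to keep track of the signs $v$ versus $v^{-1}$. A secondary but real difficulty is making the straight connecting arcs in the definition of $\phi$ globally consistent; the class $[\gamma]$, which is killed in the target, is precisely the ambiguity that must be quotiented away for this to succeed.
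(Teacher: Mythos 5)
The paper itself contains no proof of this theorem: it is stated as a result of I.~M.~Nikonov with a citation (to a reference key, \texttt{Nikonov}, that in fact never appears in the bibliography), so there is no in-paper argument to compare yours against, and your proposal has to be judged on its own merits. Its architecture is the natural one, and you correctly isolate the two facts that make the statement true: the boundary of each complementary cell is a rotating cycle, so its turn-word is exactly the defining relator of $G(\Gamma,f)$; and the curve $\gamma$ runs straight through every one of its double points, so its turn-word is empty, which is what kills $\langle\langle[\gamma]\rangle\rangle$. The role you assign to the source--sink structure (fixing whether a turn contributes $v$ or $v^{-1}$, and orienting the region words) is also the right one. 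The map $\psi$ and its descent to $\pi_{1}(\Sigma)\slash\langle\langle[\gamma]\rangle\rangle$ can indeed be completed along the lines you indicate.

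The genuine gap is in the second half, the well-definedness of the inverse $\phi$, and your own text is internally inconsistent there. You claim that concatenating the elementary turn loops around a region ``traces out that region's boundary'' and is ``null-homotopic in $\Sigma$ already before passing to the quotient,'' yet your final paragraph concedes that the straight connecting arcs only become negligible modulo $[\gamma]$ --- and the second version is the correct one. The product $\phi(v_{1})\cdots\phi(v_{k})$ is not the region boundary on the nose: it is the region boundary interleaved with $2k$ straight arcs running through the basepoint, and moreover the turn that $\phi(v_{i})$ makes at $v_{i}$ need not involve the same pair of half-edges as the corner of the region at $v_{i}$. To close the argument you must prove (i) that any closed path lying on $\gamma$ and going straight at every vertex is a power of $\gamma$ (here one uses that $\Gamma$ has a \emph{single} unicursal component), hence dies in the quotient, and (ii) that $\phi(v)$ does not depend on which of the positive turns at $v$ is chosen; neither verification appears. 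A smaller point: as defined, your turn-word is invariant under backtrack insertion only after free cancellation, so $\psi$ should be declared with values in the group $G(\Gamma,f)$ (or the free group up to reduction) rather than as a literal word. None of this makes the approach wrong --- it can be completed, and is essentially how such presentations are verified --- but as written the existence of $\phi$ is asserted rather than proved, and the one justification you do give for it is false as stated.
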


This leads us to the natural source where the labels for vertices of
$\Gamma$ may come from. With every embedding $f$ of $\Gamma$ into
$\Sigma$, one gets a natural labeling of vertices by elements of
$G(\Gamma,f)$.

Thus, curves in $S_{g}$ satisfying the checkerboard colorability
condition naturally obtain the group labeling, which allows one to
define the group bracket and the group delta for these knots.

\section{Unsolved Problems}

The bracket and the Turaev delta for curves in $2$-surfaces can be
treated as follows.

We study curves as conjugacy classes $[\gamma]$ of elements $\gamma$
of the fundamental group for an oriented $2$-surface $S_{g}$. Every
summand of $\Delta$ transform one such conjugacy class $[\gamma]$
into two conjugacy classes $[\gamma_{1}],[\gamma_{2}]$ such that
$\gamma_{1}\cdot \gamma_{2}=\gamma$. Here $\gamma_{1}\cup
\gamma_{2}$ is the result of smoothing of $\gamma$ at some crossing
$c$.

For which groups $G$ can one define a $\Delta$-like comultiplication
map $G\to \Z_{2} G\to G$ where the sum is taken along ``reference
points''?

How to define these ``reference points'' in a way similar to
crossings of a diagram? Possibly, if the comultiplication operation
is expected to be defined in the language of words for some
presentation of $G$, the operation should be defined in terms of
some letters selected in a specific way?

A similar question can be asked about the group bracket and the
parity bracket: we expect a well-defined map from $G$ to the direct
sum of various tensor products $G\otimes G\otimes \dots \otimes G$.

Can we define such a bracket for elements $g\in G$ realizing the
principle $[g]=g$ and solving the word problem/ the conjugacy
problem for some groups?

\end{document}